\documentclass[11pt,reqno]{amsart}
\setlength{\textheight}{23cm}
\setlength{\textwidth}{16cm}
\setlength{\topmargin}{-0.8cm}
\hoffset=-1.4cm 

\usepackage[skip=5pt plus1pt, indent=20pt]{parskip}

\usepackage{palatino}
\usepackage{mathpazo}
\usepackage{amsmath,amssymb,amsxtra,color,calligra,mathrsfs,tcolorbox}

\usepackage{xcolor} 
\colorlet{mdtRed}{red!50!black}
\colorlet{dblue}{blue!50!black}
\usepackage[colorlinks,pagebackref=true]{hyperref}
\hypersetup{linkcolor=dblue,citecolor=dblue,filecolor=dullmagenta,urlcolor=mdtRed}
\renewcommand*{\backref}[1]{}
\renewcommand*{\backrefalt}[4]{[{%
		\ifcase #1 Not cited.%
		\or $\uparrow$~#2.%
		\else $\uparrow$~#2.%
		\fi%
	}]}
\usepackage[all]{xy}
\usepackage{tikz,tikz-cd,tkz-graph,enumerate}
\usetikzlibrary{matrix,arrows,decorations.pathmorphing}

\usepackage[us,12hr]{datetime}
\usepackage{comment}


\DeclareMathOperator{\Hom}{{\rm Hom}}

\DeclareMathOperator{\Vect}{{\bf Vect}}
\DeclareMathOperator{\ParVect}{{\bf PVect}}

\newcommand{\mf}[1]{\mathfrak{#1}}
\newcommand{\mc}[1]{\mathcal{#1}}
\newcommand{\ms}[1]{\mathscr{#1}}
\newcommand{\bb}[1]{\mathbb{#1}}

\newcommand{\dual}{^\vee}

\newcommand{\diagram}[1]{\begin{align}
		\xymatrix{#1}
\end{align}}

\newcommand*{\homsheaf}{\mathcal{H}\,\,\kern -5pt {\large om}}
\newcommand{\parhomsheaf}{\mathcal{PH}\,\,\kern -5pt {\large om}}

\numberwithin{equation}{subsection}

\newtheorem{theorem}[equation]{Theorem}

\newtheorem{lemma}[equation]{Lemma}
\newtheorem{proposition}[equation]{Proposition}

\newtheorem{claim}[equation]{Claim}

\theoremstyle{definition}
\newtheorem{definition}[equation]{Definition}
\newtheorem{remark}[equation]{Remark}


\makeatletter
\newcommand\fnsymb[1]{\textsuperscript{\@fnsymbol{#1}}}
\newcommand\fnletter[1]{\lowercase{\textsuperscript{\@alph{#1}}}}
\newcommand\fnnum[1]{\textsuperscript{#1}}
\makeatother

\usepackage{marvosym} 
\renewcommand{\email}[2][1]{\thanks{\textit{Email address}#1: \href{mailto:#2}{#2}}}

\renewcommand{\address}[2][1]{\thanks{\textit{Address}#1: #2}} 

\begin{document}
	
	\baselineskip=15.5pt 
	
	\title[Orthogonal and Symplectic Parabolic Connections and Root Stacks]{Orthogonal and Symplectic Parabolic Connections and Stack of Roots}
	
	\author[S. Chakraborty]{Sujoy Chakraborty\fnnum{1}}
	
	\address[\fnnum{1}]{Department of Mathematics, 
		Indian Institute of Science Education and Research Tirupati, Andhra Pradesh - 517507, India.}
	
	\email[\fnnum{1}]{sujoy.cmi@gmail.com}
	
	\author[S. Majumder]{Souradeep Majumder\fnnum{2}}
	
	\address[\fnnum{2}]{Department of Mathematics, 
		Indian Institute of Science Education and Research Tirupati, Andhra Pradesh - 517507, India.} 
	
	\email[\fnnum{2}]{souradeep@iisertirupati.ac.in}
	
	
	\subjclass[2020]{14D23, 14F05, 14H60, 53B15, 53C05, 14A21}
	
	\keywords{Parabolic bundle; Root stack; Connection.}
	
	\begin{abstract}
		Let $D$ be an effective divisor on a smooth projective variety $X$ over an algebraically closed field $k$ of characteristic $0$. We show that there is a one-to-one correspondence between the class of orthogonal (respectively, symplectic) parabolic vector bundles on $X$ with parabolic structure along $D$ and having rational weights and the class of orthogonal (respectively, symplectic) vector bundles on certain root stacks associated to this data. Using this, we describe the orthogonal (respectively, symplectic) vector bundles on the root stack as reductions of the structure group to orthogonal (respectively, symplectic) groups. When $D$ is a divisor with strict normal crossings, we prove a one-to-one correspondence between the class of orthogonal (respectively, symplectic) parabolic connections on $X$ with rational weights, and the class of orthogonal (respectively, symplectic) logarithmic connections on certain fiber product of root stacks with poles along a divisor with strict normal crossings.
	\end{abstract}
	
	%
	%
	
	\maketitle 
	
	\section{Introduction}
	Parabolic vector bundles were introduced by Mehta and Seshadri \cite{MehSes80} 
	on a compact Riemann surface $X$ together with a set $D$ of distinct marked points. 
	They generalized the celebrated Narasimhan-Seshadri theorem in the case of Riemann surfaces with punctures. Later, Simpson in \cite{Sim90} reformulated and extended the definition of parabolic bundles to the case of schemes $X$ of higher dimension with a normal crossings divisor $D$. When the weights are all rational, and therefore can be assumed to be in $\frac{1}{r}\bb Z$ for some integer $r$, it amounts to a locally free sheaf $\mc E$ of finite rank, together with a filtration
	\begin{align*}
		\mc E = \mc E_0\supset \mc E_{\frac{1}{r}}\supset \mc E_{\frac{2}{r}} 
		\supset\cdots\supset \mc E_{\frac{r-1}{r}}\supset \mc E_1= \mc E(-D)
	\end{align*}
	Parabolic bundles, parabolic connections and their moduli spaces have since been an active area of research, and have found themselves used in a number of important applications. More recently, parabolic connections have made appearance in the work of Donagi and Pantev on Geometric Langlands conjecture using Simpson's non-abelian Hodge theory \cite{DonPan19}. 
	
	When the weights are rational, it is known that parabolic bundles can be 
	interpreted equivalently as equivariant bundles on a suitable ramified 
	Galois cover \cite{Bis97}, and later, a more intrinsic interpretation was 
	found using vector bundles on certain algebraic stacks associated to $(X,D)$, 
	namely the  stack of roots \cite{Bor07}. More precisely, there is a Fourier-like 
	correspondence between parabolic vector bundles on a scheme and ordinary vector 
	bundles on certain stack of roots. This naturally raised the question of 
	understanding the parabolic connections on smooth varieties through such 
	Fourier-like correspondence over root stacks. This has been shown to be 
	true over curves \cite{BisMajWon12,LorSaiSim13}, and very recently over higher 
	dimensions \cite{BorLar23}. 
	
	Let $X$ be a smooth projective variety over an algebraically closed field $k$ together with an effective Cartier divisor $D$. Let $r$ be a positive integer. To this data, one can associate a stack $\mf{X} := \mf{X}_{(\mc{O}(D),s_D,r)}$ of $r$-th roots for the line bundle $\mc{O}_X(D)$ and its canonical section $s_D$ vanishing along $D$. It was shown by N. Borne \cite{Bor07} that there is an equivalence between the category of algebraic  vector bundles on $\mf{X}$ and the category of parabolic vector bundles on $X$ with parabolic structures along $D$ and having rational weights as multiples of $\frac{1}{r}$. 
	Our first main result is an extension of this equivalence to the case of orthogonal and symplectic parabolic vector bundles. More precisely, fix a line bundle $\mc{L}$ on $\mf{X}$, and let $L_\bullet$ be the corresponding parabolic line bundle on $X$. We prove the following:
	\begin{theorem}[\text{Theorem \ref{thm:borne correspondence for orthogonal and symplectic bundles}}]
		There is an equivalence between the category of orthogonal (respectively, symplectic) parabolic vector bundles on $X$ taking values in $L_\bullet$ (Defintiion \ref{def:orthogonal and symplectic parabolic bundles}) and with rational weights as multiples of $\frac{1}{r}$, with the category of orthogonal (respectively, symplectic) vector bundles on $\mf{X}$ taking values in $\mc{L}$ (Definition \ref{def:orthogonal and symplectic bundles on stacks}).
	\end{theorem}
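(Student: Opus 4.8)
The plan is to upgrade Borne's equivalence to one that respects tensor products, duals and the symmetry constraint, and then transport the defining bilinear form across it. Write $\Phi\colon \Vect(\mf X)\to\ParVect(X,D)$ for the equivalence of \cite{Bor07}, with quasi-inverse $\Psi$, normalized so that $\Phi(\mc L)=\ub L$. In both Definition \ref{def:orthogonal and symplectic bundles on stacks} and Definition \ref{def:orthogonal and symplectic parabolic bundles}, an orthogonal (resp.\ symplectic) structure is encoded by a nondegenerate pairing valued in the ambient line object --- on the stack a map $V\otimes V\to\mc L$, on the variety a parabolic map $\ub E\otimes\ub E\to\ub L$ --- which is symmetric (resp.\ antisymmetric); equivalently, it is an isomorphism from the bundle onto the appropriate dual twist that is self-adjoint up to the sign $\pm1$. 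Hence the whole theorem reduces to showing that $\Phi$ carries the monoidal data on the stack side to the parabolic monoidal data on the variety side.

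First I would record the required compatibilities of $\Phi$: natural isomorphisms
\[
\Phi(V\otimes W)\;\cong\;\Phi(V)\otimes\Phi(W),\qquad \Phi(V\dual)\;\cong\;\Phi(V)\dual,
\]
where the right-hand sides are formed with the parabolic tensor product and the parabolic dual $\parhomsheaf(-,\ub L)$, together with the unit identification $\Phi(\mathcal{O}_{\mf X})\cong\mathcal{O}_{X,\bullet}$ and compatibility of $\Phi$ with the braiding (the swap of tensor factors). In other words, I would exhibit $\Phi$ as a symmetric monoidal equivalence. This is the technical heart of the argument: the parabolic tensor product adds weights and then saturates, and one must check that under the root-stack dictionary this operation is intertwined with the ordinary tensor product of locally free sheaves on $\mf X$, and likewise that the parabolic dual matches the ordinary dual twisted by $\mc L$. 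These compatibilities are implicit in the sheaf-theoretic description of $\Phi$ via pushforward and pullback along $\mf X\to X$; I would make them explicit, paying particular attention to how the fractional weights of $\ub L$ are absorbed into $\mc L$ so that the pairings land in the correct line object.

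Granting the monoidal structure, the transport is formal. Given an orthogonal structure $\phi\colon V\otimes V\to\mc L$ on $\mf X$, applying $\Phi$ and composing with the monoidal isomorphisms yields a parabolic pairing $\Phi(V)\otimes\Phi(V)\to\ub L$. Symmetry (resp.\ antisymmetry) is preserved because $\Phi$ respects the braiding, and nondegeneracy is preserved because it is equivalent to the adjoint $V\to V\dual\otimes\mc L$ being an isomorphism, a property that the equivalence $\Phi$ both preserves and reflects. Running the same construction with $\Psi$ produces the inverse assignment, and the two are mutually quasi-inverse because $\Phi$ and $\Psi$ are. Functoriality on morphisms --- which must preserve the forms --- follows since $\Phi$ is already a functor compatible with the pairings. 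The main obstacle throughout is the monoidal step: once $\Phi$ is known to be symmetric monoidal and to send $\mc L$ to $\ub L$, the orthogonal and symplectic cases are handled uniformly by tracking the sign $\pm1$ in the symmetry constraint.
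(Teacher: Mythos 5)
Your proposal follows essentially the same route as the paper: its proof likewise upgrades Borne's equivalence to one compatible with tensor products, internal Hom-sheaves, duals and the unit (a lemma citing Borne's Th\'eor\`eme 3.13 and Borne--Vistoli for the Hom-sheaf, plus a coend computation identifying $\mc{O}_{\mf{X}}$ with $(\mc{O}_X)_\bullet$), and then transports the bilinear form by checking that the adjoint map $\widetilde{\varphi}$ and its transpose correspond across the equivalence, so that symmetry/antisymmetry and nondegeneracy carry over. Your braiding formulation of the symmetry constraint is equivalent to the paper's transpose formulation, and the monoidal compatibilities you flag as the technical heart are exactly the steps the paper settles by citation and a short unit computation rather than by new argument.
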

	As an application, we show the following:
	\begin{proposition}[\text{Proposition \ref{prop:orthogonal and symplectic bundles as reductions of structure group}}]
		Let $X, D$ and $r$ be as above. The class of orthogonal (respectively, symplectic) bundles of rank $n$ on the stack $\mf{X}$ of $r$-th roots such that the bilinear form takes values in $\mc{O}_{\mf{X}}$ (cf. Definition \ref{def:orthogonal and symplectic bundles on stacks}) are in bijection with the class of principal $O(n)$-bundles (respectively, principal $\textnormal{Sp}(n)$-bundles) on $\mf{X}$, where in the symplectic case, $n$ is assumed to be even.
	\end{proposition}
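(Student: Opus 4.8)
The plan is to realize the bijection through the classical frame-bundle construction, carried out in the stacky setting of $\mf{X}$. Equip the trivial rank-$n$ bundle $\mc{O}_{\mf{X}}^{\oplus n}$ with the standard non-degenerate symmetric (respectively, skew-symmetric) $\mc{O}_{\mf{X}}$-valued form $J$, whose automorphism group scheme is by construction $O(n)$ (respectively, $\textnormal{Sp}(n)$, with $n$ even). Given an orthogonal (respectively, symplectic) bundle $(\mc{E}, q)$ of rank $n$ on $\mf{X}$ with $q$ valued in $\mc{O}_{\mf{X}}$ as in Definition \ref{def:orthogonal and symplectic bundles on stacks}, I would attach to it the frame bundle $P = \underline{\mathrm{Isom}}\big((\mc{O}_{\mf{X}}^{\oplus n}, J),\, (\mc{E}, q)\big)$, namely the sheaf on $\mf{X}$ whose sections over $U \to \mf{X}$ are the $\mc{O}_U$-linear isometries $(\mc{O}_U^{\oplus n}, J) \xrightarrow{\sim} (\mc{E}|_U, q|_U)$. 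The group $O(n)$ (respectively, $\textnormal{Sp}(n)$) acts freely on $P$ by precomposition, and the main point will be to verify that this action exhibits $P$ as a principal bundle.

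Conversely, given a principal $O(n)$-bundle (respectively, $\textnormal{Sp}(n)$-bundle) $P$ on $\mf{X}$, I would form the associated vector bundle $\mc{E} := P \times^{O(n)} k^n$ (respectively, $P \times^{\textnormal{Sp}(n)} k^n$) via the standard representation. Since the structure group preserves $J$ by its very definition, the form $J$ on $k^n$ descends to a well-defined non-degenerate $\mc{O}_{\mf{X}}$-valued symmetric (respectively, skew-symmetric) form $q$ on $\mc{E}$, producing an orthogonal (respectively, symplectic) bundle in the required sense. These two assignments are mutually inverse by the usual compatibility between associated bundles and their isometry frame bundles, so all the substance of the statement is concentrated in the torsor property of $P$.

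The main obstacle is therefore this local triviality: one must show that $P$ is nonempty after restriction to a suitable cover, equivalently that every orthogonal (respectively, symplectic) bundle on $\mf{X}$ is locally isometric to $(\mc{O}_{\mf{X}}^{\oplus n}, J)$. Since the characteristic of $k$ is zero, $\mf{X}$ is a smooth Deligne--Mumford stack admitting an \'etale atlas by schemes, so it suffices to establish the local standard-form statement over the Noetherian local rings of such an atlas. In the symplectic case this is the symplectic-basis theorem, valid over any commutative ring, which furnishes a local symplectic frame directly. In the orthogonal case, where $2$ is invertible, one diagonalizes the symmetric form over the local ring and then rescales the (unit) diagonal entries to $1$ after adjoining square roots; because the ground field is algebraically closed of characteristic zero, any two non-degenerate symmetric forms of the same rank become isometric on a small enough \'etale neighborhood, yielding the desired local frame. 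As $O(n)$ and $\textnormal{Sp}(n)$ are smooth in characteristic zero, the resulting torsor is automatically \'etale-locally trivial, and the torsor structure together with the mutual inverseness of the two constructions then follows formally by fppf descent on $\mf{X}$, completing the bijection.
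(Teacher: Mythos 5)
Your argument is correct, but it follows a genuinely different route from the paper's --- in fact it is essentially the alternative proof that the authors themselves mention, and deliberately set aside, in the remark immediately following the proposition (prove the statement on \'etale atlases and check compatibility under change of atlas). You build the isometry torsor $P=\underline{\mathrm{Isom}}\bigl((\mc{O}_{\mf{X}}^{\oplus n},J),(\mc{E},q)\bigr)$ directly on $\mf{X}$, and the entire weight of your proof rests on the \'etale-local standard-form statement: the symplectic-basis theorem over local rings in the alternating case, and, in the symmetric case, diagonalization over the local ring (possible since $2$ is invertible, as unimodularity forces some $q(v,v)$ to be a unit), spreading out to a Zariski neighborhood, and adjoining square roots of the unit diagonal entries, which is an \'etale cover; since the formation of the Isom-scheme commutes with base change, the atlas-level frame bundles glue to a principal bundle on $\mf{X}$ in the sense of \cite[\S 1.2]{BisMajWon12}, and the two constructions are mutually inverse for formal reasons. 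The paper instead routes the bijection through its own machinery: Theorem \ref{thm:borne correspondence for orthogonal and symplectic bundles} converts orthogonal (respectively, symplectic) bundles on $\mf{X}$ valued in $\mc{O}_{\mf{X}}$ into orthogonal (respectively, symplectic) parabolic bundles on $X$ valued in $\mc{O}_X$; Proposition \ref{prop:principal bundles correspond to tensor functors} (a Nori-type statement from \cite{BisMajWon12}) identifies principal $O(n)$-bundles on $\mf{X}$ with tensor functors from $\text{Rep}(O(n))$ to $\ParVect_{\frac{1}{r}}(X,D)$ satisfying (i)--(iv); and the bridge between these two sides is built with a Kawamata cover $p:Y\to X$ \cite{Kaw81}, Biswas's equivariant--parabolic correspondence \cite{Bis97}, the frame bundle of the resulting $\Gamma$-equivariant orthogonal bundle viewed as a $(\Gamma,O(n))$-bundle \cite{BalaBisNag01}, and the representation-theoretic fact that every irreducible representation of $O(n)$ or $\mathrm{Sp}(n)$ occurs inside a tensor power of the standard representation \cite{GooWal}, which is what keeps the parabolic weights in $\frac{1}{r}\bb{Z}$. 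Your route is shorter and more self-contained --- no Kawamata cover, no representation theory, and it works over any algebraically closed $k$ of characteristic $0$, whereas the paper's proof is explicitly set over $\bb{C}$ --- while the paper's route exhibits the proposition as an application of its main correspondence and outsources all torsor and local-triviality questions on stacks to the cited literature. Two small points to tighten in your write-up: state explicitly that $\underline{\mathrm{Isom}}$ is representable and its formation commutes with pullback (this is exactly what makes the change-of-atlas compatibilities automatic), and note that, since you produce \'etale-local sections by hand, the closing appeal to smoothness of $O(n)$, $\mathrm{Sp}(n)$ and to fppf descent is redundant rather than load-bearing.
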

	In the final section, we assume $D$ to be a divisor with strict normal crossings, meaning its irreducible components are smooth and they intersect each other transversally.  If $\textbf{D} :=(D_i)_{1\leq i \leq s}$ are the of components of $D$, we fix positive integers $(r_1,r_2,\cdots,r_s)$, and consider the product stack $\mf{X}_{(\bf D,r)}:= \prod_{i=1}^{s}\mf{X}_{(\mc{O}(D_i),s_{D_i},r_i)}$. The data of $D$ gives rise to a divisor $\mf{D}$ on $\mf{X}$ with strict normal crossings. Inspired by \cite{BorLar23}, where the authors establish an equivalence between the category of parabolic connections on $X$ and the category of logarithmic connections on certain root stacks, we extend their result to orthogonal and symplectic parabolic connections. 
	\begin{theorem}[\text{Theorem \ref{thm:borne correspondence for orthogonal and symplectic connections}}]
		Fix a line bundle $\mc{L}$ together with a logarithmic connection $\nabla_{\mc{L}}$ on $(\mf{X}_{(\bf D,r)},\mf{D})$. Let $(L_{\bullet},\nabla_{L_\bullet})$ be the corresponding parabolic connection on $(X,\bf{D})$ under Borne's equivalence. There is a one-to-one correspondence between the class of orthogonal (respectively, symplectic) vector bundles on $\mf{X}_{(\bf D,r)}$ taking values in $\mc{L}$ equipped with a compatible logarithmic connection (in the sense of \eqref{diagram:orthogonal and symplectic stacky logarithmic connections diagram}), and the class of orthogonal (respectively, symplectic) parabolic vector bundles on $X$ taking values in $L_\bullet$ equipped with a compatible parabolic connection (in the sense of Definition \ref{def:orthogonal and symplectic parabolic connection}).\\
		Furthermore, this equivalence restricts to a correspondence between the class of algebraic connections on $(\mf{X}_{(\bf D,r)},\mf{D})$ compatible with an orthogonal (respectively, symplectic) form and the class of compatible strongly parabolic connections on $(X,\bf D)$ compatible with an orthogonal (respectively, symplectic) form.
	\end{theorem}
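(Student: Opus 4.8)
The plan is to deduce the stated correspondence by combining the two equivalences already at our disposal: the equivalence of Theorem \ref{thm:borne correspondence for orthogonal and symplectic bundles} between orthogonal (respectively, symplectic) bundles on $\mf{X}_{(\bf D,r)}$ valued in $\mc{L}$ and orthogonal (respectively, symplectic) parabolic bundles on $X$ valued in $L_\bullet$, and the connection correspondence of \cite{BorLar23} relating logarithmic connections on $(\mf{X}_{(\bf D,r)},\mf{D})$ to parabolic connections on $(X,\bf D)$. The essential observation is that both of these are governed by one and the same underlying functor, namely Borne's equivalence $E \mapsto E_\bullet$ realized as the weight-graded pushforward along $\mf{X}_{(\bf D,r)} \to X$. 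Consequently the only genuinely new content is to check that this single functor carries the compatibility condition \eqref{diagram:orthogonal and symplectic stacky logarithmic connections diagram} on the stack to the compatibility of Definition \ref{def:orthogonal and symplectic parabolic connection} on $X$, and conversely.

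First I would record the two structural properties of Borne's functor that make the argument go through. The first is monoidality: the functor sends the tensor product of bundles on the stack to the parabolic tensor product of the associated parabolic bundles, and it sends internal homs and duals to parabolic internal homs and duals. This is what allows an orthogonal (respectively, symplectic) form $\varphi \colon E\otimes E \to \mc{L}$ to be transported to a parabolic form $\varphi_\bullet \colon E_\bullet \otimes E_\bullet \to L_\bullet$ of the same symmetry type, which is precisely how Theorem \ref{thm:borne correspondence for orthogonal and symplectic bundles} is set up. The second is connection-compatibility: under the correspondence, the logarithmic connection $\nabla$ on $E$ goes to the parabolic connection on $E_\bullet$, the fixed connection $\nabla_{\mc{L}}$ goes to $\nabla_{L_\bullet}$, and moreover the induced connection $\nabla\otimes 1 + 1\otimes\nabla$ on $E\otimes E$ is carried to the induced parabolic connection on $E_\bullet \otimes E_\bullet$. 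Granting these, the main correspondence follows by a diagram chase: applying the functor to the square \eqref{diagram:orthogonal and symplectic stacky logarithmic connections diagram} produces exactly the square defining compatibility in Definition \ref{def:orthogonal and symplectic parabolic connection}, and since the functor is an equivalence, a square on one side commutes if and only if its image commutes. This yields both directions of the bijection simultaneously.

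The step I expect to be the main obstacle is the second structural property, and specifically the compatibility of the monoidal constraint with the connection functor. Concretely, one must verify that the natural isomorphism identifying the pushforward of $E\otimes E$ with the parabolic tensor product $E_\bullet \otimes E_\bullet$ intertwines the Leibniz-rule connection on the source with the Leibniz-rule parabolic connection on the target, and likewise that it is compatible with the residue data along the components of $\mf{D}$ and $\bf D$. This requires tracking the weight decomposition through the tensor product and checking that the logarithmic poles and their residues match in each weight piece. I would carry this out by reducing to the local model of a single smooth component, where the root stack is modeled by a quotient of the form $[\,\mathbb{A}^{1}/\mu_{r_i}\,]$ and both the connection and the form can be written explicitly in terms of the $\mu_{r_i}$-eigenspace decomposition, and then globalizing using the \'etale-local nature of a strict normal crossings divisor.

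Finally, for the furthermore statement I would argue that the equivalence restricts as claimed because the two conditions involved are independent of one another. The property of a logarithmic connection being an honest pole-free algebraic connection on the stack, and correspondingly of a parabolic connection being strongly parabolic, is a condition on the residues alone, and it is already matched by the underlying connection correspondence prior to imposing any bilinear form. Since the orthogonal (respectively, symplectic) compatibility \eqref{diagram:orthogonal and symplectic stacky logarithmic connections diagram} does not interfere with the residue-filtration matching, the equivalence of the main statement automatically carries algebraic connections compatible with the form to strongly parabolic connections compatible with the form, and back. Thus the restricted correspondence follows from the unrestricted one together with the already-established connection correspondence.
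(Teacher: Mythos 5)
Your proposal is correct and takes essentially the same route as the paper: both arguments transport the compatibility square through the combined equivalence of Theorem \ref{thm:borne correspondence for orthogonal and symplectic bundles} and the connection correspondence of \cite{BorLar23} (using monoidality together with the fact that $\widetilde{\varphi}$ corresponds to $\widetilde{\phi_\bullet}$, i.e.\ Lemma \ref{lem:tilde correspondence}), and both obtain the furthermore statement by simply restricting to algebraic, respectively strongly parabolic, connections. The one step you flag as the main obstacle --- verifying that the monoidal constraint intertwines the Leibniz-rule connections, which you propose to check on local models $[\mathbb{A}^1/\mu_{r_i}]$ --- requires no such computation: the paper invokes that the correspondence of \cite{BorLar23} is already a \emph{tensor} equivalence, so the matching of the tensor-product connections on the two sides is available by citation.
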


	\section{Orthogonal and symplectic vector bundles on Deligne-Mumford stacks}
	We would like to extend the notion of symmetric and anti-symmetric bilinear forms on parabolic vector bundles on schemes as well as on vector bundles defined on stacks. This can be achieved using what is known as the \textit{transpose} of a morphism between such bundles. Let us first describe it for vector spaces over a field. Let $V$ and $L$ be two vector spaces over a field $K$ of dimensions $n$ and $1$ respectively. Let 
	$T: V \rightarrow \Hom(V,L)$
	be a $K$-linear map. The \textit{transpose} of $T$ is the unique $K$-linear map 
	$T^t : V\rightarrow \Hom(V,L)$
	satisfying
	\begin{align}\label{eqn:defining propetry of transpose}
		(T(v))(v')= ((T^t)(v'))(v)\,\,\forall\,v,v'\in V\,.
	\end{align}
	It is not hard to see that $T^t$ coincides with the following composition of linear maps:
	\begin{align}\label{eqn:transpose alternate description}
		V\simeq V\otimes K \stackrel{\simeq}{\longrightarrow}V\otimes (L^{\dual}\otimes L) \stackrel{\simeq}{\longrightarrow} \text{Hom}(V,L)^{\dual}\otimes L \stackrel{T^{\dual}\otimes \text{Id}_L}{\DOTSB\relbar\joinrel\relbar\joinrel\relbar\joinrel\rightarrow} V^{\dual}\otimes L \simeq \text{Hom}(V,L)
	\end{align}
	where $T^{\dual}: \text{Hom}(V,L)^{\dual}\rightarrow V^{\dual}$ is the dual map of $T$. \\
	Next, with $V$ and $L$ as above, start with a $K$-bilinear map $\phi : V\otimes V\rightarrow L$. Via the Hom-tensor adjunction, consider the map 
	\begin{align}
		\widetilde{\phi} : V &\rightarrow \text{Hom}(V,L)\\
		v &\mapsto \phi(v,\rule{.2cm}{0.15 mm})
	\end{align} 
	then $(\widetilde{\phi})^t$  in \eqref{eqn:defining propetry of transpose} is given by sending $v \mapsto \phi(\ \rule{.2cm}{0.15 mm}\ ,v)\ \ \forall \,v\in V$.
	Thus, checking if the bilinear form $\phi$ is symmetric or anti-symmetric in the usual sense is same as checking whether $(\widetilde{\phi})^t = \widetilde{\phi}$ or $(\widetilde{\phi})^t = -\widetilde{\phi}$, respectively.\\
	As the maps involved in the description of $T^t$ in \eqref{eqn:transpose alternate description} are canonical, it follows that the notion of transpose makes sense if we replace vector spaces with vector bundles. More precisely, let $K$ be a field as before. Let $E$ be a vector bundle and $L$ be a line bundle on a $K$-scheme $X$. Let	$\alpha: E\rightarrow \homsheaf(E\ ,L)$  be a vector bundle morphism. Consider the composition \eqref{eqn:transpose alternate description} where  $T$ is replaced with $\alpha$ and $K$ is replaced with the trivial bundle $\mc{O}_X$. Namely, 
	\begin{align}\label{eqn:transpose alternate description for vector bundles}
		E \simeq E\otimes \mc{O}_X \stackrel{\simeq}{\longrightarrow} E\otimes (L^{\dual}\otimes L) \stackrel{\simeq}{\longrightarrow} \homsheaf(E\ ,L)^{\dual} \otimes L \stackrel{\alpha^{\dual}\otimes \text{Id}_L}{\DOTSB\relbar\joinrel\relbar\joinrel\relbar\joinrel\rightarrow} E^{\dual}\otimes L \simeq \homsheaf(E\ ,L)
	\end{align}
	We call this composition as the \textit{transpose} of $\alpha$:
	\begin{align}\label{eqn:transpose of a vector bundle morphism}
		\alpha^t: E\rightarrow \homsheaf(E\ ,L)\ .
	\end{align}
	It is clear that $\alpha^t$ satisfies the relation \eqref{eqn:defining propetry of transpose} when $v$ and $v'$ are replaced by local sections of $E$. 
	The same result was proved in \cite[Proposition 1.2]{Hit07} as well, which we recall here for reference.
	\begin{lemma}\label{lem:transpose of vector bundle morphisms}
		Let $X$ be a scheme over $K$. Let $E\rightarrow X$ be a vector bundle of rank $n$ on $X$, and let $L\rightarrow X$ be a line bundle on $X$. Let 
		$\alpha : E \rightarrow \homsheaf(E,L)$ be a morphism of vector bundles. Then there is a well-defined \textnormal{transpose} of $\alpha$, which is also a morphism of vector bundles $\alpha^t:E\rightarrow \homsheaf(E,L)\,.$
	\end{lemma}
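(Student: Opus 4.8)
The plan is to adopt the explicit composition in \eqref{eqn:transpose alternate description for vector bundles} as the definition of $\alpha^t$ and then to verify two points: that each arrow occurring in it is a genuine morphism of vector bundles, and that the resulting map satisfies the defining relation \eqref{eqn:defining propetry of transpose} on local sections. For the first point, I would observe that three of the four arrows are standard canonical isomorphisms of locally free sheaves: the unit isomorphism $E\simeq E\otimes\mc{O}_X$; the coevaluation isomorphism $\mc{O}_X\simeq L\dual\otimes L$, available precisely because $L$ is a line bundle; and the natural identification $\homsheaf(E,L)\simeq E\dual\otimes L$, whose dual reads $\homsheaf(E,L)\dual\simeq E\otimes L\dual$ and supplies, after tensoring with $L$ and reassociating, the middle isomorphism $E\otimes(L\dual\otimes L)\simeq\homsheaf(E,L)\dual\otimes L$. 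The one remaining arrow is $\alpha\dual\otimes\Id_L$, where $\alpha\dual:\homsheaf(E,L)\dual\to E\dual$ is the $\mc{O}_X$-linear dual of $\alpha$; since tensoring a vector bundle morphism with $\Id_L$ is again a vector bundle morphism, this arrow is one as well.

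Granting this, $\alpha^t$ is a composition of morphisms of vector bundles and is therefore itself a morphism of vector bundles $E\to\homsheaf(E,L)$, which is exactly the well-definedness asserted in the statement. It then remains to confirm that this $\alpha^t$ is the correct object, namely that it satisfies \eqref{eqn:defining propetry of transpose} with $v,v'$ replaced by local sections of $E$. Because that identity and all the canonical isomorphisms above are local in nature, I would check it over a trivializing open set $U$ on which $E\cong\mc{O}_U^{\oplus n}$ and $L\cong\mc{O}_U$. Over such $U$ the verification reduces verbatim to the vector-space identity already recorded alongside \eqref{eqn:transpose alternate description}, where the same composition was seen to send a section $v$ to the form $v'\mapsto\phi(v',v)$ whenever $\alpha$ corresponds to a bilinear form $\phi$.

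The main (and essentially only) point requiring care is that the four identifications above are natural in $E$ and $L$, so that the local computations over the members of a trivializing cover glue to the asserted global statement; this naturality is precisely what justified passing from \eqref{eqn:transpose alternate description} to \eqref{eqn:transpose alternate description for vector bundles} in the first place. I therefore expect no genuine obstacle beyond the bookkeeping of the isomorphisms $\homsheaf(E,L)\simeq E\dual\otimes L$ and $L\dual\otimes L\simeq\mc{O}_X$ and their compatibility with restriction, making the argument routine once the construction \eqref{eqn:transpose alternate description for vector bundles} is in hand.
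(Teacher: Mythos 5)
Your proposal is correct and follows essentially the same route as the paper: the paper's own proof simply points to the preceding discussion, which defines $\alpha^t$ as the composition \eqref{eqn:transpose alternate description for vector bundles} built from canonical isomorphisms together with $\alpha^{\dual}\otimes \mathrm{Id}_L$, exactly as you do, and notes that the defining relation \eqref{eqn:defining propetry of transpose} holds on local sections. Your write-up just makes explicit the routine verifications (each arrow being a bundle morphism, locality/naturality for gluing) that the paper leaves implicit, and also matches its alternative citation of \cite[Proposition 1.2]{Hit07}.
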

	
	\begin{proof}
		Follows from the discussion above, or alternatively from \cite[Proposition 1.2]{Hit07}\,. Here we mention that, although the statement is proved in [\textit{loc. cit.}] assuming $\dim \,X=1$, the same proof works for higher dimensions as well.
	\end{proof}
	If the vector bundle $E$ is equipped with a bilinear form with values in a line bundle $L$, i.e. an $\mc{O}_X$-linear morphism
	\begin{align}
		\phi : E \otimes E \longrightarrow L
	\end{align}
	the Hom-tensor adjunction gives rise to a morphism $\widetilde{\phi} : E\longrightarrow \homsheaf(E\ ,L)$ in the following manner. The identity morphism of $E$ gives rise to a map
	$$\iota:\mc{O}_X\longrightarrow \homsheaf(E,E) \simeq E^{\dual}\otimes E$$
	Next, consider the composition
	\begin{align}
		E\simeq E\otimes \mc{O}_X \stackrel{\text{Id}_E\otimes\iota}{\DOTSB\relbar\joinrel\relbar\joinrel\rightarrow} E\otimes(E^{\dual}\otimes E) \simeq E^{\dual}\otimes (E\otimes E) \stackrel{\text{Id}_{E^{\dual}}\otimes \phi}{\DOTSB\relbar\joinrel\relbar\joinrel\relbar\joinrel\rightarrow} E^{\dual}\otimes L \simeq \homsheaf (E\ ,L). 
	\end{align} 
	We call this resulting composition as $\widetilde{\phi}$:
	\begin{align}
		\widetilde{\phi} : E \longrightarrow \homsheaf(E\ ,L)
	\end{align}
	By \eqref{eqn:transpose of a vector bundle morphism}, we can consider its transpose $(\widetilde{\phi})^t$, and call the bilinear form $\phi$ to be \textit{symmetric} or \textit{anti-symmetric} depending on whether $$\widetilde{\phi}^t = \widetilde{\phi}\ \  \text{or}\ \  \widetilde{\phi}^t = -\widetilde{\phi}\quad\eqref{eqn:transpose of a vector bundle morphism}\ ,$$
	respectively.
	
	\subsection{Coherent sheaves on algebraic stacks}\label{subsection:coherent sheaves on stacks}
	Fix an algebraically closed field $k$ of characteristic zero. Let $\mf{X}$ be a smooth algebraic stack of finite type over the category of $k$-schemes. For any $k$-scheme $U$, let $\mf{X}(U)$ denote the fiber category over $U$. By the $2$-categorical version of the Yoneda lemma, the objects of $\mf{X}(U)$ are in bijection with morphisms $U\rightarrow \mf{X}$\,, and we shall often use this fact without any further comments. A coherent sheaf $\mc{F}$ on $\mf{X}$ consists of the data (cf. \cite[Definition 7.18]{Vis89}): for each smooth atlas $u:U\rightarrow \mf{X}$ (i.e. $U$ is a $K$-scheme and $u$ is a smooth morphism), we are given a coherent $\mc{O}_U$-module $\mc{F}_u$ on $U$, and for any  $2$-commutative diagram
	\begin{align}\label{diagram:smooth atlas}
		\xymatrix{&&U\ar[d]^{u} \\
			V \ar[rru]^{k} \ar[rr]_v && \mf{X}}
	\end{align}	
	together with a natural isomorphism $\phi: u\circ k \rightarrow v$, where $u$ and $v$ smooth atlases, there exists an isomorphism 
	\begin{align*}
		f_{\phi,k} : \mc{F}_v \simeq k^*\mc{F}_u
	\end{align*}
	such that for any diagram 
	\begin{align}
		\xymatrix{ W \ar[r]^{l} \ar[rd]_(0.4){w} & V \ar[r]^{k} \ar[d]^{v} & U \ar[ld]^(0.4){u} \\
			& \mf{X} &
		}
	\end{align}
	with both the triangles $2$-commutative, thus having natural isomorphisms $\phi: u\circ k\rightarrow v$ and $\psi:v\circ l \rightarrow w$, the following diagram commutes:
	\begin{align}
		\xymatrix{
			\mc{F}_{w} \ar[rr]^(0.4){f_{(\psi\circ\phi),(k\circ l)}} \ar[d]_{f_{\psi,l}} && (k\circ l)^*(\mc{F}_{u}) \ar[d]^{\cong}\\
			l^*{\mc{F}_{v}} \ar[rr]^{l^*(f_{\phi,k})} && l^*(k^*\mc{F}_{u})}
	\end{align}
	A coherent sheaf $\mc{E}$ on $\mf{X}$ is called a \textit{vector bundle} if each $\mc{E}_u$ is a locally free sheaf of finite rank for each atlas $U$.
	\subsection{Transpose of a morphism of vector bundles on algebraic stacks}
	We would like to extend the notion of transpose of vector bundle morphisms as above for algebraic stacks. 
	Given two vector bundles $\mc{E},\mc{F}$ on $\mf{X}$, we can construct their internal Hom-sheaf, denoted by $\homsheaf(\mc{E},\mc{F})$, which is again a vector bundle on $\mf{X}$. Let us describe the Hom-sheaf in detail. \\
	Consider two smooth atlases  $U\xrightarrow{u} \mf{X}$ and $V\xrightarrow{v}\mf{X}$ (i.e. $U,V$ are $k$-schemes and $u,v$ are smooth maps). Consider a $2$-commutative diagram as in \eqref{diagram:smooth atlas}, together with a natural isomorphism $\phi: u\circ k\rightarrow v$. Then $\mc{E}$ and $\mc{F}$ give rise to vector bundles  $\mc{E}_u$ and $\mc{F}_u$ on $U$ (respectively, $\mc{E}_v$ and $\mc{F}_v$ on $V$), together with isomorphisms
	\begin{align}\label{eqn:compatibility maps for stacky vector bundles}
		f_{\phi,k} : \mc{E}_v \stackrel{\simeq}{\longrightarrow} k^*\mc{E}_u\,\,\text{and}\,\,g_{\phi,k} : \mc{F}_v \xrightarrow{\simeq} k^*\mc{F}_u\,.
	\end{align}
	This induces an isomorphism 
	\begin{align}\label{eqn:compatibility maps on hom}
		\theta_{\phi,k}: \homsheaf(\mc{E}_v,\mc{L}_v) &\stackrel{\simeq}{\longrightarrow} k^*\homsheaf(\mc{E}_u,\mc{F}_u) \simeq \homsheaf(k^*\mc{E}_u,k^*\mc{F}_u)\\
		\text{given by}\,\ \ \ \ 
		\phi & \mapsto g_{\phi,k}\circ \phi\circ (f_{\phi,k})^{-1}\,.
	\end{align}
	The data of $\{\homsheaf(\mc{E}_u,\mc{F}_u)\mid u:U\rightarrow \mf{X} \,\ \text{is a smooth atlas}\}$, together with the maps $\theta_{\phi,k}$ for each $2$-commutative diagram \eqref{diagram:smooth atlas} constitute the sheaf $\homsheaf(\mc{E},\mc{F})$ on $\mf{X}$.
	
	\begin{proposition}
		Let $\mc{E}$ be a vector bundle on $\mf{X}$, and $\mc{L}$ be a line bundle on $\mf{X}$. Consider a morphism of vector bundles $\alpha: \mc{E}\rightarrow \homsheaf(\mc{E},\mc{L})$ on the stack $\mf{X}$\,. There is a well-defined \textnormal{transpose} of $\alpha$, denoted by $\alpha^t : \mc{E}\rightarrow \homsheaf(\mc{E},\mc{L})$\,.
	\end{proposition}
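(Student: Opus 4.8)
The plan is to define $\alpha^t$ atlas-by-atlas using the scheme-level construction of Lemma \ref{lem:transpose of vector bundle morphisms}, and then to verify that the resulting local data glue to a genuine morphism on $\mf{X}$. Recall that the datum of $\alpha$ amounts to specifying, for every smooth atlas $u:U\to\mf{X}$, a morphism of $\mc{O}_U$-modules $\alpha_u:\mc{E}_u\to\homsheaf(\mc{E}_u,\mc{L}_u)$, subject to the compatibility that for each $2$-commutative diagram \eqref{diagram:smooth atlas} with natural isomorphism $\phi:u\circ k\to v$ one has $k^*\alpha_u\circ f_{\phi,k}=\theta_{\phi,k}\circ\alpha_v$, where $f_{\phi,k}:\mc{E}_v\xrightarrow{\sim}k^*\mc{E}_u$ is the structural isomorphism of \eqref{eqn:compatibility maps for stacky vector bundles} and $\theta_{\phi,k}$ is the induced conjugation isomorphism on Hom-sheaves from \eqref{eqn:compatibility maps on hom}. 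Since each $\mc{E}_u$ is locally free of finite rank and each $\mc{L}_u$ is a line bundle on the scheme $U$, Lemma \ref{lem:transpose of vector bundle morphisms} produces for every $u$ a transpose $(\alpha_u)^t:\mc{E}_u\to\homsheaf(\mc{E}_u,\mc{L}_u)$. I would then declare $\alpha^t$ to be the collection $\{(\alpha_u)^t\}_u$, so that the entire content of the proof reduces to checking that this new collection again satisfies the compatibility with the $\theta_{\phi,k}$.

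The key input for that check is that forming the transpose commutes with pullback and, more precisely, with the conjugation isomorphisms. Concretely, since the transpose is built in \eqref{eqn:transpose alternate description for vector bundles} entirely out of the canonical isomorphisms $\mc{E}_u\simeq\mc{E}_u\otimes\mc{O}_U$, the coevaluation $\mc{O}_U\simeq\mc{L}_u^{\dual}\otimes\mc{L}_u$, the evaluation pairing, and the dual morphism $\alpha_u^{\dual}$, and since the symmetric monoidal functor $k^*$ commutes with $(-)^{\dual}$ and with $\homsheaf$ for locally free sheaves of finite rank, one gets $k^*((\alpha_u)^t)=(k^*\alpha_u)^t$. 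Feeding this together with the relation $k^*\alpha_u\circ f_{\phi,k}=\theta_{\phi,k}\circ\alpha_v$ into the defining formula $\theta_{\phi,k}(\beta)=g_{\phi,k}\circ\beta\circ(f_{\phi,k})^{-1}$, and using that taking transposes is natural under an isomorphism of the pair $(\mc{E}_u,\mc{L}_u)$, one should obtain the desired identity $k^*((\alpha_u)^t)\circ f_{\phi,k}=\theta_{\phi,k}\circ(\alpha_v)^t$. This is exactly the compatibility required of the collection $\{(\alpha_u)^t\}_u$, and the cocycle condition is then inherited from that of $\{\alpha_u\}_u$.

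I expect the main obstacle to lie in this last naturality verification: one must track how the transpose interacts with the conjugation $\theta_{\phi,k}(\,\cdot\,)=g_{\phi,k}\circ(\,\cdot\,)\circ(f_{\phi,k})^{-1}$, and in particular confirm that, because $(-)^{\dual}$ is contravariant, the transition map on $\homsheaf(\mc{E}_u,\mc{L}_u)^{\dual}\otimes\mc{L}_u$ assembles $(f_{\phi,k})^{\dual}$ and $g_{\phi,k}$ in precisely the order that reproduces $\theta_{\phi,k}$ after transposing. The crux is that the evaluation and coevaluation maps are equivariant for isomorphisms of $(\mc{E}_u,\mc{L}_u)$, so that conjugation by $(f_{\phi,k},g_{\phi,k})$ commutes with the whole composite \eqref{eqn:transpose alternate description for vector bundles}. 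Once this single diagram is seen to commute, everything else is formal, and we conclude that $\alpha^t$ is a well-defined morphism of vector bundles on $\mf{X}$, characterized atlas-wise by the relation \eqref{eqn:defining propetry of transpose}.
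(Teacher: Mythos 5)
Your proposal is correct and follows essentially the same route as the paper: define $\alpha^t$ atlas-by-atlas via Lemma \ref{lem:transpose of vector bundle morphisms}, note that transposition commutes with pullback, and reduce everything to the single compatibility that transposition intertwines the conjugation isomorphisms $\theta_{\phi,k}$. The only difference is in how that key point is verified — the paper does an explicit fiberwise element computation (the Claim in its proof, using the defining property \eqref{eqn:defining propetry of transpose}), whereas you appeal to equivariance of the canonical coevaluation, evaluation and dual maps in the composite \eqref{eqn:transpose alternate description for vector bundles} under isomorphisms of the pair $(\mc{E}_u,\mc{L}_u)$; both verifications are valid.
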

	\begin{proof}
		Consider two smooth atlases  $U\xrightarrow{u} \mf{X}$ and $V\xrightarrow{v}\mf{X}$ together with a $2$-commutative diagram as in \eqref{diagram:smooth atlas}, and a natural isomorphism $\phi: u\circ k\rightarrow v$. Then $\mc{E}$ and $\mc{L}$ give rise to vector bundles  $\mc{E}_u$ and $\mc{L}_u$ on $U$  (respectively, $\mc{E}_v$ and $\mc{L}_v$ on $V$), together with isomorphisms 
		\begin{align}
			f_{\phi,k} : \mc{E}_v \stackrel{\simeq}{\longrightarrow} k^*\mc{E}_u\,\,\text{and}\,\,g_{\phi,k} : \mc{F}_v \xrightarrow{\simeq} k^*\mc{F}_u\,.
		\end{align}
		Recall the Hom-sheaf $\homsheaf(\mc{E},\mc{L})$ constructed above. The morphism $\alpha : \mc{E}\rightarrow \homsheaf(\mc{E},\mc{L})$ gives rise to vector bundle morphisms 
		$$\alpha_u : \mc{E}_u\rightarrow \homsheaf(\mc{E}_u,\mc{L}_u)\ \ \text{and}\ \  \alpha_v: \mc{E}_v\rightarrow \homsheaf(\mc{E}_v,\mc{L}_v)$$
		such that the following diagram commutes:
		\begin{align}\label{diagram:morphism of vector bundles on stacks compatibility on atlas}
			\xymatrix{\mc{E}_v \ar[rr]^(.4){\alpha_v} \ar[d]_{f_{\phi,k}} & & \homsheaf(\mc{E}_v,\mc{L}_v) \ar[d]^{\theta_{\phi,k}}\\
				k^*\mc{E}_u \ar[rr]^(.4){k^*\alpha_u} && \homsheaf(k^*\mc{E}_u,k^*\mc{L}_u)
			}
		\end{align}
		where $\theta_{\phi,k}$ is as in \eqref{eqn:compatibility maps on hom}. Consider their transpose maps $\alpha_u^t$ and $\alpha_v^t$ [Lemma \ref{lem:transpose of vector bundle morphisms}]. It is clear that the construction of transpose commutes with pull-back, namely $(k^*\alpha_u)^t = k^*(\alpha_u^t)$\,.\\
		Now, in order to get a well-defined transpose map $\alpha^t$, we need to check the commutativity of the diagram 
		\begin{align}\label{diagram:transpose}
			\xymatrix{\mc{E}_v \ar[rr]^(.4){\alpha^t_v} \ar[d]_{f_{\phi,k}} & & \homsheaf(\mc{E}_v,\mc{L}_v) \ar[d]^{g_{\phi,k}}\\
				k^*\mc{E}_u \ar[rr]^(.4){k^*(\alpha^t_u)} && \homsheaf(k^*\mc{E}_u,k^*\mc{L}_u)
			}
		\end{align}
		It is enough to check the commutativity of this diagram fiber-wise. Thus, using  \eqref{eqn:compatibility maps on hom} and the diagram \eqref{diagram:morphism of vector bundles on stacks compatibility on atlas} we reduce to the following question: let $V, W$ be two vector spaces, $L, M$ be two $1$-dimensional vector spaces, and $f : V\xrightarrow{\simeq} W$, $g: L\xrightarrow{\simeq} M$ be two linear isomorphisms. Consider the induced isomorphism $\theta: \Hom(V,L) \rightarrow \Hom(W,M)$ by sending \ $\phi\mapsto g\circ \phi\circ f^{-1}$\,.
		Thus, we need to check the following.
		\begin{claim}
			If $\gamma: V\rightarrow \Hom(V,L)$ and $\delta: W\rightarrow \Hom(W,M)$ are two linear maps such that the diagram
			\diagram{V \ar[rr]^(.4){\gamma} \ar[d]_{f}^{\simeq} && \Hom(V,L) \ar[d]^{\theta}_{\simeq} \\
				W \ar[rr]^(.4){\delta} && \Hom(W,M)}
			commutes, i.e. $\theta\circ \gamma = \delta\circ f$, then we have $\theta\circ (\gamma^t) = (\delta^t)\circ f$\,.
		\end{claim}
		\textit{Proof of claim.}\ \ \ 
		Let $v\in V, w\in W$\,. We have
		\begin{align*}
			\quad\qquad\qquad\qquad\quad\left[(\theta\circ\gamma^t)(v)\right](w) = \left[\theta\left((\gamma^t)(v)\right)\right](w) &= \left[g\circ((\gamma^t)(v))\circ f^{-1}\right](w)\\
			&=g\left(\gamma^t(v)(f^{-1}(w))\right)\\
			&=g\left(\gamma(f^{-1}(w))(v)\right)\ [\text{by definition}]\\
			&=g\left((\gamma\circ f^{-1})(w)(v)\right)\\
			&=g\left((\theta^{-1}\circ\delta)(w)(v)\right)\\
			&=g\left((\theta^{-1}(\delta(w)))(v)\right)\\
			&=g\left((g^{-1}\circ\delta(w)\circ f)(v)\right)\\
			&=\delta(w)(f(v))\\
			&=\delta^t(f(v))(w)\\
			&=(\delta^t\circ f)(v)(w)
		\end{align*}
		\qquad\qquad \qquad\quad We have thus shown that $\theta\circ(\gamma^t)=(\delta^t)\circ f\,.$\hfill(\text{Claim proved}) {\parfillskip0pt\par}
		Thus the diagram \eqref{diagram:transpose} commutes, making $\alpha^t_u$ compatible with $\alpha^t_v$\,. This proves the existence of a well-defined morphism $\alpha^t : \mc{E}\rightarrow \homsheaf(\mc{E},\mc{L})$ as required\,.
	\end{proof}
	It can be checked that the transpose of the bundle morphism $\alpha: \mc{E}\rightarrow \homsheaf(\mc{E}\ ,\mc{L})$ above can be defined analogous to what was done for vector bundles on schemes in \eqref{eqn:transpose alternate description for vector bundles} by considering the composition
	\begin{align}\label{eqn:transpose alternate definition for stacky vector bundles}
		\mc{E}\simeq \mc{E}\otimes\mc{O}_{\mf{X}}\stackrel{\simeq}{\longrightarrow} \mc{E}\otimes(\mc{L}^{\dual}\otimes\mc{L}) \stackrel{\simeq}{\longrightarrow} \homsheaf(\mc{E}\ ,\mc{L})^{\dual}\otimes\mc{L}\stackrel{\alpha^{\dual}\otimes \text{Id}_{\mc{L}}}{\DOTSB\relbar\joinrel\relbar\joinrel\relbar\joinrel\rightarrow} \mc{E}^{\dual}\otimes\mc{L}\simeq \homsheaf(\mc{E}\ ,\mc{L})
	\end{align}
	and calling the resulting composition as the transpose of $\alpha$:
	\begin{align}
		\alpha^t:\mc{E}\longrightarrow\homsheaf(\mc{E}\ ,\mc{L})\ .
	\end{align}
	\subsection{Orthogonal and symplectic bundles on Deligne-Mumford stacks}\label{subsection:orthogonal and symplectic for stacks}
	Let $\mf{X}$ be a Deligne-Mumford stack of finite type over $k$ as before. Fix a line bundle $\mc{L}$ on $\mf{X}$. Let $\mc{E}$ a vector bundle of rank $n$. Let 
	\begin{align}
		\varphi: \mc{E}\otimes\mc{E}\longrightarrow \mc{L}
	\end{align}
	be a homomorphism of vector bundles on $\mf{X}$. The identity endomorphism of $\mc{E}$ produces for us a vector bundle morphism 
	\begin{align}
		j : \mc{O}_{\mf{X}} \longrightarrow \homsheaf(\mc{E},\mc{E})\simeq \mc{E}^{\dual}\otimes \mc{E}
	\end{align}
	Let us consider the following composition:
	\begin{align}\label{eqn:stacky tilde composition}
		\mc{E} \simeq \mc{E}\otimes \mc{O}_{\mf{X}} \stackrel{\text{Id}\otimes j}{\DOTSB\relbar\joinrel\relbar\joinrel\rightarrow} \mc{E}\otimes (\mc{E}^{\dual}\otimes \mc{E}) \simeq \mc{E}^{\dual}\otimes\left(\mc{E}\otimes \mc{E}\right) \stackrel{\text{Id}_{\mc{E}^{\vee}}\otimes\varphi}{\DOTSB\relbar\joinrel\relbar\joinrel\relbar\joinrel\rightarrow} \mc{E}^{\dual}\otimes\mc{L}\simeq \homsheaf(\mc{E},\mc{L})
	\end{align}
	Let  $\widetilde{\varphi}$ denote the composition: 
	\begin{align}\label{eqn:tilde of a stacky bilinear form}
		\widetilde{\varphi}:\mc{E}\longrightarrow \homsheaf(\mc{E},\mc{L})
	\end{align}
	\vspace{1cm}
	\begin{definition}\label{def:orthogonal and symplectic bundles on stacks}	
		Let $\mc{L}$ be a fixed line bundle on $\mf{X}$. \hfill
		\begin{enumerate}
			\item An \textit{orthogonal bundle on} $\mf{X}$ \textit{taking values in} $\mc{L}$ is a pair $(\mc{E},\varphi)$, where $\mc{E}$ is a vector bundle on $\mf{X}$ together with a morphism $\varphi: \mc{E}\otimes\mc{E}\longrightarrow \mc{L}$ of vector bundles, such that the map $\widetilde{\varphi}$ is \textit{symmetric} (i.e. $\widetilde{\varphi}^t = \widetilde{\varphi}$) and an \textit{isomorphism}.\\
			\item A \textit{symplectic bundle on} $\mf{X}$ \textit{taking values in} $\mc{L}$ is a pair $(\mc{E},\varphi)$, where $\mc{E}$ is a vector bundle on $\mf{X}$ together with a morphism $\varphi: \mc{E}\otimes\mc{E}\longrightarrow \mc{L}$ of vector bundles, such that the map $\widetilde{\varphi}$ is \textit{anti-symmetric} (i.e. $\widetilde{\varphi}^t = -\widetilde{\varphi}$) and an \textit{isomorphism}.
		\end{enumerate}
	\end{definition}
	\subsection{Root stacks}\label{subsection:root stacks}
	A \textit{generalized Cartier divisor} on $X$ is a tuple $(L, s)$, 
	where $L$ is a line bundle on $X$ and $s \in \Gamma(X,L)$. 
	Given a generalized Cartier divisor $(L, s)$ and a positive integer $r$, 
	let $\mf X_{(L,\, s,\, r)}$ be the stack of $r$-th roots of $(L, s)$; 
	it is a Deligne-Mumford stack \cite{Cad07}. 
	As a category, its objects are given by tuples $\left(f : U \to X, M, \phi, t\right)$, 
	where 
	\begin{itemize}
		\item $f : U \to X$ is a morphism of $k$-schemes, 
		\item $M$ is a line bundle on $U$, 
		\item $t \in \Gamma(U, M)$, and 
		\item $\phi : M^{\otimes r} \stackrel{\simeq}{\longrightarrow} f^*L$ 
		is an $\mc O_U$-module isomorphism such that $\phi(t^{\otimes r}) = f^*s$. 
	\end{itemize}
	A morphism from 
	$\left(f : U \to X, M, \phi, t\right)$ to $\left(f' : U' \to X, M', \phi', t'\right)$ 
	in $\mf X_{(L,\, s,\, r)}$ is given by a pair $(g, \psi)$, where 
	\begin{itemize}
		\item $g : U \to U'$ is a morphism of $X$-schemes, and 
		\item $\psi : M \to g^* M'$ is an $\mc O_U$-module isomorphism such that $\psi(t)= g^*(t')$. 
	\end{itemize}
	There is a natural morphism of stacks $\pi : \mf{X}_{(L,\, s,\, r)} \to X$ that sends 
	the object $\left(f : U \to X, M, \phi, t\right)$ of $\mf X_{(L,\, s,\,r)}$ 
	to the $X$-scheme $f : U \to X$, and a morphism $(g, \psi)$, as above, 
	to the morphism of $X$-schemes $g : U \to U'$. We shall be interested in the case when we have an effective Cartier divisor $D$ on $X$. Consider the corresponding line bundle ${\mc O}(D)$, and let $s_D$ denote the tautological section of ${\mc O}(D)$ vanishing along $D$. For a given positive integer $r$, we shall consider the $r$-th root stack ${\mf X}_{({\mc O}(D),s_D,r)}$ together with the canonical morphism $\pi:\mf{X}_{(\mc{O}(D),s_D,r)}\longrightarrow X$\ .  It is well-known that $\mf{X}_{(\mc{O}(D),s_D,r)}$ is a Deligne-Mumford stack of finite type over $X$ \cite{Cad07}.
	\section{Orthogonal and symplectic parabolic bundles}\label{section:orthogonal and symplectic parabolic bundles}
	The notion of parabolic vector bundles over connected smooth projective curves were introduced by Mehta and Seshadri in \cite{MehSes80}. It was later extended to higher dimensional varieties by Simpson, Maruyama and Yokogawa \cite{MarYok92,Yok95,Sim90}. Let $X$ be a smooth projective variety over $k$\,.  Let $D$ be an effective Cartier divisor on $X$. Let us fix a positive integer $r$. We consider the set $\bb R$ as an indexing category, where we have a unique arrow $a\rightarrow b$ if and only if $a\leq b$. Let $\textnormal{\bf Vect}(X)$ denote the category of algebraic vector bundles on $X$. The following is taken from \cite{Yok95}.
	\begin{definition}\label{def:parabolic bundles}
		A \textit{parabolic vector bundle} on $X$ with parabolic structure along $D$ is a tuple $(E_{\bullet}, {\mf j}_{E_{\bullet}})$, where 
		\begin{align}
			E_{\bullet} : {\bb R}^{op} \longrightarrow \textnormal{\bf Vect}(X)
		\end{align}
		is a functor, and 
		\begin{align}
			{\mf j}_{E_{\bullet}} : E_{\bullet+1}  \stackrel{\simeq}{\longrightarrow} E_{\bullet}\otimes \mc{O}_X(-D)
		\end{align}
		is a natural isomorphism, making the following diagram commutes:
		\begin{align}
			\xymatrix{
				E_{\bullet+1} \ar[r] \ar[d]_{{\mf j}_{E_{\bullet}}} & E_\bullet \\
				E_\bullet \otimes \mc{O}_X(-D) \ar[ru]
			}
		\end{align}
		where the left vertical arrow is the canonical inclusion. Furthermore, for each $\alpha,\beta$ such that $\alpha\leq \beta<\alpha +1$, the quotient $\dfrac{E_{\alpha}}{E_{\beta}}$ is a locally free $\mc{O}_D$-module.
		We also require that there exists a sequence of real numbers 
		\begin{align}
			0\leq \alpha_1 < \alpha_2 < \cdots \alpha_m< 1
		\end{align}
		such that each $E_{\alpha_i} \rightarrow E_{\alpha}$ is an isomorphism for all  real numbers $\alpha_{i-1}<\alpha \leq \alpha_i$ (by convention, $\alpha_0 = 0, \alpha_{m+1}=1$). The collection $\{\alpha_i\mid 1\leq i\leq m\}$ are called \textit{weights} of $(E_{\bullet}, {\mf j}_{E_\bullet})$ if furthermore $E_{\alpha_{i+1}}\rightarrow E_{\alpha_i}$ are not isomorphisms $\forall \ 1\leq i\leq m$.
	\end{definition}
	We shall sometime suppress the isomorphism ${\mf j}_{\bullet}$, and denote a parabolic bundle as above simply by $E_{\bullet}$. Any algebraic vector bundle $E$ on $X$ can be seen as a parabolic vector bundle with parabolic structure along $D$. The resulting parabolic bundle is denoted by $(E)_{\bullet}$, and it is defined as  
	\begin{align}\label{def:special parabolic structure}
		(E)_t = E(-[t]D) \ \ \forall \ t\in {\bb R}\ .\qquad\text{\cite{Yok95}}
	\end{align}
	We say that the resulting parabolic bundle has \textit{special structure}.\\
	For two parabolic bundles $E_{\bullet}$ and $E_{\bullet}$, a natural transformation $\phi_{\bullet}: E_{\bullet}\rightarrow F_\bullet$ is said to be a \textit{parabolic morphism} if the following diagram commutes:
	\begin{align}
		\xymatrix{
			E_{\bullet+1} \ar[rr]^{\phi_{\bullet + 1}} \ar[d]_{{\mf j}_{E_\bullet}}^{\simeq} && F_{\bullet +1} \ar[d]^{{\mf j}_{F_\bullet}}_{\simeq}  \\
			E_{\bullet}\otimes \mc{O}_X(-D) \ar[rr]^{\phi_{\bullet}\otimes \text{Id}} &&  F_{\bullet}\otimes \mc{O}_X(-D)
		}
	\end{align} 
	For any two parabolic bundles $E_{\bullet}$ and $F_{\bullet}$\,, this category admits their tensor product $(E_{\bullet}\otimes F_{\bullet})_{\bullet}$ as well as their internal Hom object, namely the parabolic Hom-sheaf $\parhomsheaf(E_{\bullet}\ ,F_{\bullet})_{\bullet}$ \cite[\S 2.1]{Bis03,BalaBisNag01,Yok95}\ .  In particular, any parabolic bundle $E_{\bullet}$ admits a parabolic dual, denoted by $E^{\dual}_{\bullet}$:
	$$E_{\bullet}^{\dual} : = \parhomsheaf(E_{\bullet}\ ,({\mc O}_X)_{\bullet})_{\bullet} $$
	where $({\mc O}_X)_\bullet$ is the trivial line bundle $\mc{O}_X$ with special parabolic structure as in \eqref{def:special parabolic structure}. We refer the reader to \cite{Yok95} for further details.\\
	Unless specified otherwise, from here onward we shall consider parabolic bundles with parabolic structure along $D$.
	\subsection{Orthogonal and symplectic parabolic bundles}
	When $X$ is a curve, the notion of orthogonal and symplectic parabolic vector bundles over $X$ were introduced in \cite{BisMajWon11}\,. Here we extend their definition for higher dimensions.  Fix a parabolic line bundle $L_{\bullet}$ on $X$. Let $E_{\bullet}$ be a parabolic vector bundle of rank $n$ together with a parabolic morphism
	\begin{align}
		\phi_{\bullet} : (E_{\bullet} \otimes E_{\bullet})_\bullet \longrightarrow  L_{\bullet}
	\end{align} 
	The identity parabolic endomorphism of $E_{\bullet}$ gives rise to an $\mc{O}_X$-module homomorphism 
	\begin{align}\label{eqn:morphism coming from identity}
		\mc{O}_X \longrightarrow \parhomsheaf (E_{\bullet},E_{\bullet})
	\end{align}
	Now, let $(\mc{O}_X)_{\bullet}$  denotes the sheaf $\mc{O}_X$ with the special structure. Recall from \eqref{def:special parabolic structure} that it is defined as
	\begin{align}
		(\mc{O}_X)_{t} = \mc{O}_X([-t]D)\,\,\forall\,t\in \bb{R}\,.\qquad\text{\cite{Yok95}}
	\end{align}
	We claim that the morphism in \eqref{eqn:morphism coming from identity} gives rise to a parabolic morphism $$(\mc{O}_X)_{\bullet}\rightarrow \parhomsheaf(E_{\bullet}\ ,E_\bullet)_\bullet\,.$$
	To see this, first of all, any parabolic sheaf $F_{\bullet}$ satisfies the following:
	\begin{align}
		&F_{t} \simeq \parhomsheaf((\mc{O}_X)_{\bullet}\ , F_{\bullet})_{t}\ \ \ \,\forall\,{t\in \bb{R}}  \qquad \text{\cite[Example 3.3]{Yok95}}\\
		\implies&F_0\simeq \parhomsheaf((\mc{O}_X)_{\bullet}\ , F_{\bullet})_0 \\
		\implies& \homsheaf(\mc{O}_X,F_0)\simeq \parhomsheaf((\mc{O}_X)_{\bullet}\ , F_{\bullet}) \qquad\text{\cite[Definition 3.2]{Yok95}}
	\end{align}
	In particular, for the parabolic sheaf $F_\bullet = \parhomsheaf(E_{\bullet}\,,E_{\bullet})_{\bullet}$ we get 
	\begin{align}
		\homsheaf\left(\mc{O}_X, \parhomsheaf\left(E_{\bullet},E_{\bullet}\right)\right) &=  \homsheaf\left(\mc{O}_X, \parhomsheaf\left(E_{\bullet},E_{\bullet}\right)_0\right)\qquad\text{\cite[Definition 3.2]{Yok95}}\\
		&\simeq \parhomsheaf\left((\mc{O}_X)_{\bullet}\ , \parhomsheaf(E_{\bullet},E_{\bullet})_{\bullet}\right)\quad\text{\cite[Example 3.3]{Yok95}}
	\end{align}
	Thus the morphism \eqref{eqn:morphism coming from identity} gives rise to a morphism of parabolic bundles 
	\begin{align}\label{eqn:parabolic morphism coming from identity}
		\iota_{\bullet} : (\mc{O}_X)_{\bullet} \longrightarrow \parhomsheaf(E_{\bullet},E_{\bullet})_{\bullet} \simeq (E_{\bullet}^{\dual}\otimes E_{\bullet})_{\bullet}
	\end{align}
	where the last isomorphism follows from \cite[Lemma 3.6]{Yok95}.  Consider the composition
	\begin{align}\label{eqn:parabolic tilde composition}
		E_{\bullet} \simeq (E_{\bullet}\otimes (\mc{O}_X)_{\bullet})_{\bullet} \stackrel{\text{Id}_{E_{\bullet}}\otimes\iota_{\bullet}}{\DOTSB\relbar\joinrel\relbar\joinrel\relbar\joinrel\rightarrow}(E_{\bullet}\otimes (E^{\dual}_{\bullet}\otimes E_{\bullet})_{\bullet})_{\bullet}\simeq (E_{\bullet}^{\dual}\otimes (E_{\bullet}\otimes E_{\bullet})_{\bullet})_{\bullet} \stackrel{\text{Id}\otimes \phi_{\bullet}}{\DOTSB\relbar\joinrel\relbar\joinrel\relbar\joinrel\rightarrow} (E_{\bullet}^{\dual}\otimes L_{\bullet})_{\bullet}
	\end{align}
	Let $\widetilde{\phi_{\bullet}}$ denote the resulting composition:
	\begin{align}\label{eqn:tilde of a parabolic bilinear form}
		\widetilde{\phi_{\bullet}} : E_{\bullet} \longrightarrow \parhomsheaf(E_{\bullet}\,,L_{\bullet})_{\bullet}
	\end{align}
	Now, one can define the transpose of \ $\widetilde{\phi_{\bullet}}$ along the same lines as we did for usual vector bundles. More generally, assume that we have a parabolic morphism $$\psi_{\bullet} : E_\bullet \rightarrow \parhomsheaf(E_\bullet\ , L_\bullet)_{\bullet}\ .$$
	We have already seen in \eqref{eqn:parabolic morphism coming from identity} that the identity parabolic morphism of $L_\bullet$ gives rise to a parabolic morphism
	\begin{align}\label{eqn:identity parabolic morphism}
		i_{\bullet}: (\mc{O}_X)_{\bullet} \longrightarrow (L^{\dual}_{\bullet}\otimes L_{\bullet})_{\bullet}
	\end{align}
	Next, consider the composition 
	\begin{align}\label{eqn:transpose alternate description for parabolic bundles}
		E_\bullet \stackrel{\simeq}{\rightarrow}(E_{\bullet}\otimes (\mc{O}_X)_{\bullet})_{\bullet}\stackrel{\text{Id}\otimes i_{\bullet}}{\DOTSB\relbar\joinrel\relbar\joinrel\rightarrow} (E_{\bullet}\otimes (L^{\dual}_{\bullet}\otimes L_{\bullet})_{\bullet})_{\bullet} \stackrel{\simeq}{\rightarrow} (\parhomsheaf(E_{\bullet}\ ,L_{\bullet})^{\dual}_{\bullet} \otimes L_{\bullet})_{\bullet}\stackrel{\psi^{\dual}_{\bullet}\otimes \text{Id}}{\DOTSB\relbar\joinrel\relbar\joinrel\relbar\joinrel\rightarrow} (E^{\dual}_{\bullet}\otimes L_{\bullet})_{\bullet}
	\end{align}
	We denote this resulting composition as the \textit{transpose} of $\psi_{\bullet}$: 
	\begin{align}
		\psi_{\bullet}^{t}: E_{\bullet} \longrightarrow \parhomsheaf(E_\bullet\ ,L_\bullet)_{\bullet}
	\end{align}
	
	\begin{remark}
		The morphism $i_{\bullet}$ constructed in \eqref{eqn:identity parabolic morphism} is in fact a parabolic isomorphism. To see this, we observe that since $L_\alpha$ is of rank $1$, for any $\alpha\in [0,1)$ it follows from Definition \ref{def:parabolic bundles} that $L_{\alpha} = L_0$ or $L_\alpha = L_1 = L(-D)$. Thus, the set of weights for a line bundle $L_\bullet$ should be singleton, namely $\{\text{min}(0\leq \alpha<1\mid L_\alpha = L_0)\}$. Now, the collection of weights for the dual line bundle $L^{\dual}_\bullet$ and the tensor product $(L^{\dual}_{\bullet}\otimes L_\bullet)_\bullet$ is known explicitly (cf. \cite[\S 2.1 pp. 310]{Bis03}), from which it directly follows that $0$ is always a parabolic weight of $(L^{\dual}\otimes L_\bullet)_\bullet$, and thus it is the only weight (as the set of weights is singleton). Thus $(L^{\dual}\otimes L_\bullet)_\bullet \simeq \parhomsheaf(L_\bullet\ ,L_\bullet)_\bullet$ has the special parabolic structure \eqref{def:special parabolic structure}, namely $$\parhomsheaf(L_\bullet\ ,L_\bullet)_t = \parhomsheaf(L_\bullet\ ,L_\bullet)([-t]D)\,\forall\ t\in \bb{R}\ .$$
		Thus the morphism $i_{\bullet} : (\mc{O}_X)_\bullet \rightarrow \parhomsheaf(L_\bullet\ ,L_\bullet)_\bullet$ is completely determined by the map $i_0 : \mc{O}_X \rightarrow \parhomsheaf(L_\bullet\ ,L_\bullet) $, which is an isomorphism since $\parhomsheaf(L_\bullet\ ,L_\bullet) \subset \homsheaf(L_0\ ,L_0)\simeq \mc{O}_X$. It follows that $i_\bullet$ is also an isomorphism. 
	\end{remark}
	\begin{definition}\label{def:orthogonal and symplectic parabolic bundles}
		Let $L_{\bullet}$ be a fixed parabolic line bundle on $X$.\hfill
		\begin{enumerate}[(1)]
			\item An \textit{orthogonal parabolic bundle on} $X$  \text{taking values in} $L_{\bullet}$ is a pair $(E_{\bullet}\,,\phi_{\bullet})$, where $E_{\bullet}$ is a parabolic bundle on $X$ together with a parabolic morphism $\phi_{\bullet}:(E_{\bullet}\otimes E_{\bullet})_{\bullet}\rightarrow L_{\bullet}$, such that the map $\widetilde{\phi_{\bullet}}$ in \eqref{eqn:tilde of a parabolic bilinear form} is \textit{symmetric} $\left(\text{i.e.}\ \  (\widetilde{\phi_{\bullet}})^t = \widetilde{\phi_{\bullet}}\right)$ and a \textit{parabolic isomorphism}.
			\item A \textit{symplectic parabolic bundle on} $X$ \textit{taking values in} $L_{\bullet}$ is a pair $(E_{\bullet}\,,\phi_{\bullet})$, where $E_{\bullet}$ is a parabolic bundle on $X$ together with a parabolic morphism $\phi_{\bullet}:(E_{\bullet}\otimes E_{\bullet})_{\bullet}\rightarrow L_{\bullet}$, such that the map $\widetilde{\phi_{\bullet}}$ in \eqref{eqn:tilde of a parabolic bilinear form} is \textit{anti-symmetric} $\left(\text{i.e.}\ \  (\widetilde{\phi_{\bullet}})^t = -\widetilde{\phi_{\bullet}}\right)$  and a \textit{ parabolic isomorphism}.
		\end{enumerate}
		If there is no scope of confusion, we shall simply call them as orthogonal or symplectic parabolic bundles, without mentioning the parabolic line bundle $L_{\bullet}$.
	\end{definition}	
	\section{Biswas-Borne correspondence for orthogonal and symplectic parabolic bundles}
	Consider the stack of $r$-th roots $\mf{X}=\mf{X}_{(\mc{O}(D),s_D,r)}$ together with the natural morphism of stacks $\pi: \mf{X}\rightarrow X$ as in \S \ref{subsection:root stacks}.  Let $\textbf{\text{Vect}}(\mf{X})$ denote the category of algebraic vector bundles on $\mf{X}$. Let $\textbf{\text{PVect}}_{\frac{1}{r}}(X,D)$ denote the category of parabolic bundles over $X$ with parabolic structure along $D$ and weights in $\frac{1}{r}{\bb Z}$. It is clear from Definition \ref{def:parabolic bundles} that parabolic bundles with weights in $\frac{1}{r}{\bb Z}$ can as well be regarded as functors $E_{\bullet} : \left(\frac{1}{r}\bb Z\right)^{op}\rightarrow \textbf{\text{Vect}}(X)$.\\ It has been shown in \cite{Bor07} that the categories $\textbf{\text{PVect}}_{\frac{1}{r}}(X,D)$ and $\text{\textbf{Vect}}(\mf{X})$ are equivalent. Let us recall the correspondence.
	\begin{theorem}\cite[Th\'eor\`eme 2.4.7]{Bor07}\label{thm:Biswas-Borne-correspondence} 
		There is an equivalence of categories between $\Vect(\mf{X})$ and $\textnormal{\textbf{PVect}}_{\frac{1}{r}}(X,D)$ 
		given by the following correspondence:
		\begin{enumerate}[$\bullet$]
			\item A vector bundle $\mathcal{E}$ on $\mf{X}$ gives rise to the 
			parabolic vector bundle $E_{\bullet}$ on $X$
			given by 
			$$\frac{\ell}{r} \longmapsto E_{\frac{\ell}{r}} 
			:= \pi_*\left(\mathcal{E}\otimes_{\mc O_{\mf X}}\ms M^{-\ell}\right),\,\,\forall\,\ell\,\in \bb Z.$$ 
			
			\item Conversely, suppose $\ms{M}$ denote the tautological line bundle on $\mf{X}$ satisfying $\pi^*(\mc{O}(D))\simeq \ms{M}^{\otimes r}$ \cite[\S 3.2]{Bor07}. A parabolic vector bundle $E_{\bullet}$ on $X$ 
			gives rise to the vector bundle $\mathcal{E}$ on $\mf{X}$ given by 
			$$\mathcal{E} := \int^{\frac{\ell}{r}\in\frac{1}{r}\bb{Z}} 
			\pi^*(E_{\frac{\ell}{r}})\otimes_{{\mc O}_{\mf X}} \ms{M}^{\otimes \ell}\ ,$$
			where $\int^{\frac{1}{r}\mathbb{Z}}$ stands for the coend.
		\end{enumerate}
	\end{theorem}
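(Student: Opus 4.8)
The plan is to prove the equivalence by reducing every assertion to an \'etale-local computation, where $\mf{X}$ acquires an explicit description as a $\mu_r$-quotient stack and the whole statement becomes a matter of the representation theory of $\mu_r$ (which is linearly reductive since $k$ has characteristic $0$). First I would assemble the structural input on $\pi$ and the tautological bundle $\ms M$: the isomorphism $\ms M^{\otimes r}\simeq \pi^*\mc{O}(D)$, the tautological section $t\colon \mc{O}_{\mf X}\to \ms M$ with $t^{\otimes r}=\pi^* s_D$, the projection formula $\pi_*(\mc{F}\otimes \pi^*G)\simeq (\pi_*\mc{F})\otimes G$, and \emph{exactness} of $\pi_*$ (\'etale-locally $\pi_*$ is the functor of $\mu_r$-invariants, exact in characteristic $0$). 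The computation that makes everything run is $\pi_*\ms M^{\otimes(-\ell)}\simeq \mc{O}_X$ for $0\le \ell\le r-1$, together with $\pi_*\ms M^{\otimes(-r)}\simeq \mc{O}_X(-D)$, multiplication by $t$ inducing the evident inclusions between these.

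Granting these, the assignment $\tfrac{\ell}{r}\mapsto E_{\ell/r}:=\pi_*(\mc{E}\otimes \ms M^{-\ell})$ is functorial in $\ell$ via $t\colon \ms M^{-(\ell+1)}\to \ms M^{-\ell}$, while the periodicity $\ms M^{-(\ell+r)}\simeq \ms M^{-\ell}\otimes \pi^*\mc{O}(-D)$ combined with the projection formula yields the natural isomorphism $E_{\bullet+1}\simeq E_\bullet\otimes \mc{O}_X(-D)$ of Definition \ref{def:parabolic bundles}. To check that $E_\bullet$ is a genuine parabolic bundle — local freeness of each $E_{\ell/r}$, discreteness of weights, and local freeness of the quotients $E_\alpha/E_\beta$ over $\mc{O}_D$ — I would pass to an \'etale chart $U\to X$ trivializing $\mc{O}(D)$ with $D=V(f)$, so that $\mf{X}\times_X U\simeq [\Spec(A[\tau]/(\tau^r-f))/\mu_r]$ with $\mu_r$ acting by $\deg\tau=1$. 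There a vector bundle is a $\bb Z/r$-graded locally free $B$-module $\bigoplus_i N_i$, the functor $\pi_*(-\otimes \ms M^{-\ell})$ extracts the graded piece indexed by $\ell \bmod r$ as an $A$-module, and multiplication by $\tau$ (with $\tau^r$ acting as $f$) produces exactly the filtration together with its $\mc{O}(-D)$-periodicity; local freeness of $N$ over $B$ makes the quotient and freeness conditions immediate.

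For the inverse functor I would read the coend $\mc{E}=\int^{\ell/r}\pi^*(E_{\ell/r})\otimes \ms M^{\otimes\ell}$ as the universal recipient of compatible maps from the $\pi^*(E_{\ell/r})\otimes \ms M^{\ell}$; \'etale-locally $\pi^*(E_{\ell/r})$ is the induced module $N_{\ell\bmod r}\otimes_A B$, the twist by $\ms M^{\ell}$ shifts its grading, and the defining relations of the coend reassemble the graded pieces and their $\tau$-maps back into $\bigoplus_i N_i$, so that $\mc{E}$ is again locally free of the correct rank. The two functors are quasi-inverse once the unit and counit are shown to be isomorphisms; by faithfully flat descent along $U\to X$ it suffices to exhibit these in the local model, where they are the tautological statement that a $\bb Z/r$-graded locally free $B$-module is recovered from its graded components together with the multiplication-by-$\tau$ maps.

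The main obstacle is concentrated in two places which in fact reduce to the same $\mu_r$-equivariant computation: pinning down the pushforward formulas for $\ms M^{\otimes j}$ (on which well-definedness of both functors and the periodicity isomorphism rest), and proving that the coend is represented by a \emph{locally free} sheaf with unit and counit isomorphisms. The genuine work is therefore in setting up the \'etale-local quotient presentation cleanly and verifying that all the canonical identifications — projection formula, coend, and the descent data of Subsection \ref{subsection:coherent sheaves on stacks} — glue compatibly; once the local dictionary between graded $B$-modules and filtered $A$-modules is in place, the global equivalence follows by descent.
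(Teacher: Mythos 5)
The paper does not actually prove this theorem: it is imported verbatim from Borne \cite[Th\'eor\`eme 2.4.7]{Bor07}, so there is no internal argument to compare yours against. What you propose is, in substance, the proof strategy of that cited source: reduce Zariski-locally on $X$ to the case $D=V(f)$, where $\mf{X}$ becomes the quotient stack $[\Spec B/\mu_r]$ with $B=A[\tau]/(\tau^r-f)$; identify vector bundles on $\mf{X}$ with $\bb{Z}/r$-graded locally free $B$-modules; observe that $\pi_*(-\otimes\ms{M}^{-\ell})$ extracts the degree-$\ell$ graded piece while multiplication by $\tau$ gives the filtration maps and the $\mc{O}_X(-D)$-periodicity; and glue by descent, with the coend reassembling the graded pieces for the inverse functor. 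Two steps you mark as \emph{immediate} carry the real weight, and a complete write-up must do them: (i) to verify the parabolic axioms (local freeness of $E_\alpha/E_\beta$ over $\mc{O}_D$ and the jump structure) one needs that a graded projective $B$-module is, near points of $D$, graded \emph{free}, i.e. isomorphic to a direct sum of shifts $B(a_j)$ --- this follows from graded Nakayama over the graded-local ring $B$, and it is only on such a decomposition that the quotient conditions become a direct computation; (ii) the identification of the coend with $\bigoplus_{i=0}^{r-1}N_i$ as a $B$-module (with $\tau$ acting through the filtration maps), together with the check that the locally constructed unit and counit are canonical and glue, is exactly where the compatibility data of \S\ref{subsection:coherent sheaves on stacks} must be invoked rather than waved at. One inaccuracy worth correcting: exactness of $\pi_*$ has nothing to do with characteristic $0$; $\mu_r$ is diagonalizable, so taking invariants (the degree-$0$ part of a graded module) is exact in every characteristic, and the theorem itself is not intrinsically a characteristic-$0$ statement.
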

	Both $\ParVect_{\frac{1}{r}}(X,D)$ and $\Vect(\mf{X})$ are additive tensor categories which also admit internal Hom objects. The following lemma shows that the equivalence of categories above preserves tensor products, duals and internal Hom objects as well.
	\begin{lemma}\
		Consider the equivalence between $\Vect(\mf{X})$ and $\ParVect_{\frac{1}{r}}(X,D)$ in Theorem \ref{thm:Biswas-Borne-correspondence}. Let $E_{\bullet}$ and $F_{\bullet}$ be two parabolic bundles on $X$ which correspond to the vector bundles $\mc{E}$ and $\mc{F}$ on $\mf{X}$ respectively. Then the parabolic dual $(E_{\bullet}\otimes F_{\bullet})_{\bullet}$ on $X$ corresponds to the dual $\mc{E}\otimes \mc{F}$ on $\mf{X}$, and the parabolic \textnormal{Hom}-sheaf $\parhomsheaf(E_{\bullet}\,,F_{\bullet})_{\bullet}$ on $X$ corresponds to the \textnormal{Hom}-sheaf $\homsheaf_{\mc{O}_{\mf{X}}}(\mc{E}\,,\mc{F})$ on $\mf{X}$. Consequently, the parabolic dual $E^{\dual}_{\bullet}$ on $X$ corresponds to the dual $\mc{E}^{\dual}$ on $\mf X$.
	\end{lemma}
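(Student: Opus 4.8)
The plan is to prove that the functor $E_\bullet \mapsto \mc{E}$ of Theorem \ref{thm:Biswas-Borne-correspondence} is a \emph{strong monoidal} equivalence, carrying Yokogawa's parabolic tensor product on $\ParVect_{\frac1r}(X,D)$ to $\otimes_{\mc{O}_{\mf{X}}}$ on $\Vect(\mf{X})$. Granting this, the statements about internal Hom-sheaves and duals become purely formal: both categories are, as noted above, additive tensor categories with internal Hom objects, hence closed symmetric monoidal, and a strong monoidal equivalence between closed symmetric monoidal categories automatically preserves internal Homs and dual objects. Thus the entire content of the lemma is concentrated in the compatibility with tensor products, which I treat first.

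For the tensor product I would use the coend description of the functor. Writing $\mc{E} \simeq \int^{\ell/r} \pi^*(E_{\ell/r}) \otimes_{\mc{O}_{\mf{X}}} \ms{M}^{\otimes \ell}$ and $\mc{F} \simeq \int^{m/r} \pi^*(F_{m/r}) \otimes_{\mc{O}_{\mf{X}}} \ms{M}^{\otimes m}$, I compute $\mc{E}\otimes_{\mc{O}_{\mf{X}}}\mc{F}$ using three elementary facts: (i) $-\otimes_{\mc{O}_{\mf{X}}}-$ is cocontinuous in each variable, being a left adjoint to $\homsheaf$, and hence commutes with coends; (ii) $\pi^*$ is strong monoidal, so $\pi^*(E_{\ell/r})\otimes\pi^*(F_{m/r})\simeq\pi^*(E_{\ell/r}\otimes F_{m/r})$; and (iii) $\ms{M}^{\otimes\ell}\otimes\ms{M}^{\otimes m}\simeq\ms{M}^{\otimes(\ell+m)}$. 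Together these present $\mc{E}\otimes_{\mc{O}_{\mf{X}}}\mc{F}$ as a double coend $\int^{\ell/r}\int^{m/r}\pi^*(E_{\ell/r}\otimes F_{m/r})\otimes_{\mc{O}_{\mf{X}}}\ms{M}^{\otimes(\ell+m)}$. A Fubini reduction for coends, reindexed by the total weight $n=\ell+m$, then collapses this to $\int^{n/r}\pi^*\bigl((E_\bullet\otimes F_\bullet)_{n/r}\bigr)\otimes_{\mc{O}_{\mf{X}}}\ms{M}^{\otimes n}$, where $(E_\bullet\otimes F_\bullet)_\bullet$ is Yokogawa's parabolic tensor product (a Day-convolution-type coend over the ordered index set, cf. \cite{Yok95}). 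By Theorem \ref{thm:Biswas-Borne-correspondence} the last expression is precisely the stacky bundle associated to $(E_\bullet\otimes F_\bullet)_\bullet$, yielding the desired isomorphism between $\mc{E}\otimes_{\mc{O}_{\mf{X}}}\mc{F}$ and the image of $(E_\bullet\otimes F_\bullet)_\bullet$. Since every isomorphism used is canonical, this isomorphism is natural in $E_\bullet,F_\bullet$ and compatible with the associativity and symmetry constraints, so the functor is strong monoidal.

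It remains to record unit-preservation and to deduce the remaining assertions. The unit objects correspond: applying the pushforward formula to $\mc{O}_{\mf{X}}$ gives $\pi_*(\mc{O}_{\mf{X}}\otimes\ms{M}^{-\ell})=\pi_*\ms{M}^{-\ell}\simeq(\mc{O}_X)_{\ell/r}$, the special parabolic structure \eqref{def:special parabolic structure}, so $(\mc{O}_X)_\bullet$ corresponds to $\mc{O}_{\mf{X}}$ (this also follows abstractly, as a strong monoidal functor sends units to units, cf. \cite{Bor07}). For the internal Hom, since the functor is an equivalence it preserves the tensor-Hom adjunction $\Hom(\mc{G}\otimes\mc{E},\mc{F})\simeq\Hom(\mc{G},\homsheaf(\mc{E},\mc{F}))$; transporting this adjunction through the strong monoidal equivalence and appealing to the Yoneda lemma, with $\mc{G}$ ranging over all objects by essential surjectivity, gives that $\homsheaf(\mc{E},\mc{F})$ corresponds to $\parhomsheaf(E_\bullet,F_\bullet)_\bullet$. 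Finally the dual is the special case $F_\bullet=(\mc{O}_X)_\bullet$: since $E^\dual_\bullet=\parhomsheaf(E_\bullet,(\mc{O}_X)_\bullet)_\bullet$ and $(\mc{O}_X)_\bullet$ corresponds to $\mc{O}_{\mf{X}}$, the Hom-compatibility just proved shows that $E^\dual_\bullet$ corresponds to $\homsheaf(\mc{E},\mc{O}_{\mf{X}})=\mc{E}^\dual$.

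The main obstacle I anticipate is the Fubini reduction of the double coend to the single coend: one must verify that reindexing by the total weight genuinely reproduces \emph{Yokogawa's} parabolic tensor product and not merely some weight-wise tensor product, carefully tracking the shift isomorphisms $\mf{j}_{E_\bullet},\mf{j}_{F_\bullet}$ (equivalently, the periodicity $\ms{M}^{\otimes r}\simeq\pi^*\mc{O}(D)$) that reconcile the integer index $\ell$ with the weight $\ell/r$. One must also confirm that the resulting monoidal isomorphism is coherent with the associativity and symmetry constraints, as this coherence is what licenses the formal closed-monoidal argument of the last two paragraphs. All remaining steps are either standard manipulations of coends or general facts about strong monoidal equivalences of closed categories.
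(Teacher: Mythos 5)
Your proposal is correct in substance but takes a genuinely different route from the paper. The paper's proof is essentially by citation: compatibility with tensor products is quoted from Borne \cite{Bor09} (Th\'eor\`eme 3.13), compatibility of internal Hom-sheaves from Borne--Vistoli \cite{BorVis12} (Theorem 6.1 and Example 6.2), and the only thing actually proved there is the identification of unit objects, via the coend formula of \cite{Bor07} (Lemme 3.17) specialized to $m=0$, namely $\int^{\frac{\ell}{r}\in\frac{1}{r}\bb Z}\pi^*\bigl(\mc O_X\bigl(\bigl[-\tfrac{\ell}{r}\bigr]D\bigr)\bigr)\otimes\ms M^{\otimes\ell}\simeq\mc O_{\mf X}$, after which the dual statement follows exactly as in your last paragraph. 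You instead re-derive the tensor compatibility from scratch (cocontinuity of $\otimes$, strong monoidality of $\pi^*$, and the Fubini/reindexing collapse of the double coend), and then obtain the Hom and dual statements as formal consequences of having a strong monoidal equivalence between closed symmetric monoidal categories. That formal reduction is a genuine clarification the paper does not make explicit; note that it silently relies on $\parhomsheaf(E_\bullet,F_\bullet)_\bullet$ really being the internal Hom for Yokogawa's tensor product (the tensor-Hom adjunction in $\ParVect_{\frac1r}(X,D)$), which holds by \cite{Yok95} and is what licenses your Yoneda argument. The trade-off is clear: the paper's argument is short and leans on established results, while yours is self-contained but concentrates all of the difficulty into the step you yourself flag as the main obstacle --- verifying that the reindexed double coend reproduces Yokogawa's parabolic tensor product (equivalently, that Yokogawa's tensor is the Day convolution and the coend realization functor is strong monoidal, with the shift isomorphisms $\ms M^{\otimes r}\simeq\pi^*\mc O(D)$ correctly tracked). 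That verification is precisely the content of the results the paper cites, so your plan is viable but amounts to reproving Th\'eor\`eme 3.13 of \cite{Bor09} rather than bypassing it; conversely, once you grant that theorem, your third paragraph gives a cleaner, citation-free derivation of the Hom and dual statements than the paper's appeal to \cite{BorVis12}.
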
 
	
	\begin{proof}
		The correspondence between tensor products follows from \cite[Th\'eor\`eme 3.13]{Bor09}, while the correspondence for internal Hom-sheaves follows from \cite[Theorem 6.1 and Example 6.2]{BorVis12}\,. Also, note that by definition 
		\begin{align}\label{eqn:duals}
			\mc{E}^{\dual} = \homsheaf(\mc{E}\ ,\mc{O}_{\mf{X}})\ \ \text{and}\ \  E^{\dual}_{\bullet} = \parhomsheaf(E_{\bullet}\ , (\mc{O}_X)_{\bullet})_{\bullet}
		\end{align}
		where $(\mc{O}_X)_{\bullet}$ denote the sheaf $\mc{O}_X$ with special structure \eqref{def:special parabolic structure}. We have just seen that the internal Hom-sheaves are respected under the correspondence. To show that $\mc{E}^{\dual}$ corresponds to $E^{\dual}_{\bullet}$, it is thus enough to show that $\mc{O}_{\mf{X}}$ on $\mf{X}$ corresponds to the parabolic sheaf $(\mc{O}_X)_{\bullet}$ on $X$. 
		To see this, we use the following isomorphism of vector bundles on $\mf{X}$,
		\begin{align}
			&\int^{\frac{\ell}{r}\in\frac{1}{r}\bb{Z}} 
			\pi^*\left(\mc{O}_X\left(\left[-\frac{\ell+m}{r}\right]D\right)\right)\otimes_{{\mc O}_{\mf X}} \ms{M}^{\otimes \ell} \simeq \ms{M}^{\otimes -m}\ \ \forall\ m\in\bb{Z}\qquad \text{\cite[Lemme 3.17]{Bor07}}\\
			\implies&\int^{\frac{\ell}{r}\in\frac{1}{r}\bb{Z}} 
			\pi^*\left(\mc{O}_X\left(\left[-\frac{\ell}{r}\right]D\right)\right)\otimes_{{\mc O}_{\mf X}} \ms{M}^{\otimes \ell} \simeq \mc{O}_{\mf{X}}\qquad(\text{for}\ m=0)\label{eqn:trivial parabolic bundle corresponds to trivial bundle on stack}
		\end{align}
		The term in LHS of the last isomorphism is precisely the vector bundle on $\mf{X}$ corresponding to $(\mc{O}_X)_\bullet$ (cf. statement of Theorem \ref{thm:Biswas-Borne-correspondence}). Thus \eqref{eqn:trivial parabolic bundle corresponds to trivial bundle on stack} precisely says that $(\mc{O}_X)_{\bullet}$ corresponds to $\mc{O}_{\mf{X}}$, as claimed.
	\end{proof} 
	
	\begin{lemma}\label{lem:tilde correspondence}
		Consider a vector bundle $\mc{E}$ on $\mf{X}$ together with a vector bundle morphism $\varphi : \mc{E}\otimes \mc{E} \rightarrow \mc{L}$ taking values in a line bundle $\mc{L}$. Under the equivalence of Theorem \ref{thm:Biswas-Borne-correspondence}, suppose $\mc{E}, \mc{L}$ and $\varphi$ correspond to $E_{\bullet}, L_\bullet$ and $\phi_{\bullet}: (E_\bullet\otimes E_\bullet)_{\bullet}\rightarrow L_\bullet$ on $X$ respectively.  Then the morphism $\widetilde{\varphi}$ of \eqref{eqn:tilde of a stacky bilinear form} corresponds to the parabolic morphism $\widetilde{\phi_{\bullet}}$ of \eqref{eqn:tilde of a parabolic bilinear form} .
	\end{lemma}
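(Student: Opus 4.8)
The plan is to recognize the tilde construction as nothing more than the Hom–tensor adjunction isomorphism, and then to invoke the fact---established in the preceding Lemma---that Borne's equivalence of Theorem \ref{thm:Biswas-Borne-correspondence} is an equivalence of closed symmetric monoidal categories: it preserves tensor products, duals, and internal Hom-sheaves together with their structural isomorphisms, and it sends the unit $\mc{O}_{\mf X}$ to $(\mc{O}_X)_{\bullet}$. Granting this, the statement becomes a formal consequence of naturality, with essentially no computation to carry out.

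First I would unwind the definitions to observe that $\widetilde{\varphi}$ of \eqref{eqn:tilde of a stacky bilinear form} is precisely the image of $\varphi$ under the adjunction bijection
\[
\Hom_{\mf X}(\mc{E}\otimes\mc{E},\mc{L})\xrightarrow{\ \simeq\ }\Hom_{\mf X}(\mc{E},\homsheaf(\mc{E},\mc{L}))\,,
\]
the composite \eqref{eqn:stacky tilde composition} being exactly the standard realization of this bijection through the coevaluation $j:\mc{O}_{\mf X}\to\mc{E}^{\dual}\otimes\mc{E}$ attached to $\Id_{\mc{E}}$ and the canonical identification $\mc{E}^{\dual}\otimes\mc{L}\simeq\homsheaf(\mc{E},\mc{L})$. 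The same unwinding shows that $\widetilde{\phi_{\bullet}}$ of \eqref{eqn:tilde of a parabolic bilinear form}, built from the parabolic coevaluation $\iota_{\bullet}$ of \eqref{eqn:parabolic morphism coming from identity} via the composite \eqref{eqn:parabolic tilde composition}, is the image of $\phi_{\bullet}$ under the parabolic adjunction
\[
\Hom_X\!\left((E_{\bullet}\otimes E_{\bullet})_{\bullet},L_{\bullet}\right)\xrightarrow{\ \simeq\ }\Hom_X\!\left(E_{\bullet},\parhomsheaf(E_{\bullet},L_{\bullet})_{\bullet}\right)\,.
\]
I would then track each arrow of the two composites through the functor: being a functor it sends $\Id_{\mc{E}}$ to $\Id_{E_{\bullet}}$; preserving duals, tensor products and the unit it carries $j$ to $\iota_{\bullet}$ (both are the images of the respective identity endomorphisms under the canonical isomorphisms $\homsheaf(\mc{E},\mc{E})\simeq\mc{E}^{\dual}\otimes\mc{E}$ and $\parhomsheaf(E_{\bullet},E_{\bullet})_{\bullet}\simeq(E_{\bullet}^{\dual}\otimes E_{\bullet})_{\bullet}$, which are themselves preserved); by hypothesis it carries $\varphi$ to $\phi_{\bullet}$; and being symmetric monoidal it intertwines the associativity, symmetry and unit constraints. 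Assembling these, the composite \eqref{eqn:stacky tilde composition} is carried to \eqref{eqn:parabolic tilde composition}, which is exactly the assertion $\widetilde{\varphi}\leftrightarrow\widetilde{\phi_{\bullet}}$; equivalently, a closed symmetric monoidal equivalence intertwines the two adjunction isomorphisms displayed above, and the result is immediate.

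The main obstacle, and the only point requiring genuine care, is verifying that the \emph{structural} isomorphisms used in defining $\widetilde{\phi_{\bullet}}$---the associativity and commutativity constraints of the parabolic tensor product, the identification $\parhomsheaf(E_{\bullet},E_{\bullet})_{\bullet}\simeq(E_{\bullet}^{\dual}\otimes E_{\bullet})_{\bullet}$ underlying \eqref{eqn:parabolic morphism coming from identity}, and the parabolic Hom–tensor adjunction---are the very isomorphisms respected by Borne's equivalence, rather than merely abstractly isomorphic to their stacky counterparts. Since the parabolic tensor product and internal Hom are defined through coends and Yokogawa's constructions, this amounts to checking compatibility of the equivalence with those coend formulas, which is precisely what the preceding Lemma supplies via \cite{Bor09,BorVis12}. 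Once that compatibility and the naturality of the adjunction are in place, no further argument is needed.
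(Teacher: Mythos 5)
Your proposal is correct and takes essentially the same route as the paper: both arguments reduce the lemma to a stage-by-stage comparison of the compositions \eqref{eqn:stacky tilde composition} and \eqref{eqn:parabolic tilde composition}, with the single non-formal point being that the coevaluation $j$ corresponds to $\iota_{\bullet}$, which in both cases is settled by combining the preceding lemma (preservation of tensor, dual, internal Hom and the unit $\mc{O}_{\mf{X}}\leftrightarrow(\mc{O}_X)_{\bullet}$) with a naturality argument applied to the identity endomorphisms. The only difference is packaging: the paper verifies the key point via an explicit commutative diagram of Hom-sets (its displayed Claim about conjugation by isomorphisms), whereas you phrase the same verification as the statement that a closed symmetric monoidal equivalence intertwines the Hom--tensor adjunctions.
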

	\begin{proof}
		Recall that the morphisms $\widetilde{\varphi}$ and $\widetilde{\phi_{\bullet}}$ are defined as compositions \eqref{eqn:stacky tilde composition} and \eqref{eqn:parabolic tilde composition} respectively. We recall them here for convenience:
		\begin{align}
			&\mc{E} \simeq \mc{E}\otimes \mc{O}_{\mf{X}} \stackrel{j}{\rightarrow} \mc{E}\otimes (\mc{E}^{\dual}\otimes \mc{E}) \simeq \mc{E}^{\dual}\otimes\left(\mc{E}\otimes \mc{E}\right) \stackrel{\text{Id}_{\mc{E}^{\vee}}\otimes\varphi}{\DOTSB\relbar\joinrel\relbar\joinrel\relbar\joinrel\rightarrow} \mc{E}^{\dual}\otimes\mc{L}\\
			&E_{\bullet} \simeq (E_{\bullet}\otimes (\mc{O}_X)_{\bullet})_{\bullet} \stackrel{\iota_{\bullet}}{\longrightarrow}(E_{\bullet}\otimes (E^{\dual}_{\bullet}\otimes E_{\bullet})_{\bullet})_{\bullet}\simeq (E_{\bullet}^{\dual}\otimes (E_{\bullet}\otimes E_{\bullet})_{\bullet})_{\bullet} \stackrel{\text{Id}\otimes \phi_{\bullet}}{\DOTSB\relbar\joinrel\relbar\joinrel\relbar\joinrel\rightarrow} (E_{\bullet}^{\dual}\otimes L_{\bullet})_{\bullet}			
		\end{align}
		Comparing the bundles and morphisms appearing in these two compositions under the equivalence of Theorem \ref{thm:Biswas-Borne-correspondence},  we see immediately that at each stage of the two compositions they correspond to each other, except possibly for $j$ and $\iota_{\bullet}$. Thus it is enough to show that $j$ corresponds to $\iota_{\bullet}$ under the equivalence. To this end, we observe the following general fact for categories, which is easy to check: suppose that $\ms{C}$ and $\ms{D}$ be two categories. Let $F:\ms{C}\rightarrow\ms{D}$ be a functor. Let $\alpha: A\simeq A'$ and $\beta: B\simeq B'$ be two isomorphisms in $\ms{C}$. Then the following diagram is commutative:
		\begin{align}\label{diagram:generality}
			\xymatrix{\text{Hom}_{\ms{C}}(A,B) \ar[rr] \ar[d] && \text{Hom}_{\ms{C}}(A',B') \ar[d] \\
				\text{Hom}_{\ms{D}}(F(A),F(B)) \ar[rr] && \text{Hom}_{\ms{D}}(F(A'),F(B'))
			}
		\end{align}
		where the upper horizontal arrow  sends $f\mapsto \beta\circ f\circ \alpha^{-1}$ (similarly the lower horizontal arrow sends $g\mapsto F(\beta)\circ g\circ F(\alpha)^{-1}$).\\
		In particular, applying this observation for the categories $\ms{C}=\Vect(\mf{X})$ and $\ms{D}=\ParVect_{\frac{1}{r}}(X,D)$ together with the equivalence of Theorem \ref{thm:Biswas-Borne-correspondence} as the functor $F$, and replacing $\alpha$ by the canonical isomorphisms $\text{Id}_{(\mc{O}_X)_{\bullet}} : (\mc{O}_X)_{\bullet} \rightarrow (\mc{O}_X)_{\bullet}$, and
		$\beta$ by the isomorphism $$\parhomsheaf(E_{\bullet}\ ,E_{\bullet})_{\bullet} \simeq \parhomsheaf((\mc{O}_X)_{\bullet}\ ,\parhomsheaf(E_{\bullet}\ ,E_{\bullet})_{\bullet})_{\bullet}\ ,\qquad\text{\cite[Example 3.3]{Yok95}}$$
		we obtain the commutative diagram
		\diagram{
			\text{Hom}(\mc{O}_{\mf{X}}\ ,\homsheaf(\mc{E}\ ,\mc{E})) \ar[rr] \ar[d] && \text{Hom}\left(\mc{O}_{\mf{X}}\ , \homsheaf(\mc{O}_{\mf{X}}\ ,\homsheaf(\mc{E}\ ,\mc{E}))\right) \ar[d]\\
			\text{PHom}((\mc{O}_X)_{\bullet}\ ,\parhomsheaf(E_{\bullet}\ ,E_{\bullet})_{\bullet}) \ar[rr] && \text{PHom}((\mc{O}_X)_{\bullet}\ ,\parhomsheaf((\mc{O}_X)_{\bullet}\ ,\parhomsheaf(E_{\bullet}\ ,E_{\bullet})_{\bullet})\\
		}
		The identity morphism of $\mc{E}$ gives rise to an element in the top left Hom set, which clearly maps to $j$ via the upper horizontal map, and to the map associated to $\text{Id}_{E_{\bullet}}$ via the left vertical map. Since $\iota_{\bullet}$ is precisely the image of the latter element under the bottom horizontal map, it follows that $j$ maps to $\iota_{\bullet}$ via the right vertical map, which proves our claim.
	\end{proof}
	\begin{theorem}\label{thm:borne correspondence for orthogonal and symplectic bundles}
		Let $X, D$ and $r$ be as above. Fix a parabolic line bundle $L_\bullet$ with weights in $\frac{1}{r}\bb Z$ on $X$, and let $\mc{L}$ denote the corresponding line bundle on the $r$-th root stack $\mf X$  under the equivalence of Theorem \ref{thm:Biswas-Borne-correspondence}. There is an equivalence between the category of orthogonal (respectively, symplectic) vector bundles on the root stack $\mf{X}$ taking values in $\mc L$, and the category of orthogonal (respectively, symplectic) parabolic vector bundles on $X$ taking values in $L_\bullet$\ .
	\end{theorem}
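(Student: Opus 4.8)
The plan is to leverage Borne's equivalence $F:\Vect(\mf{X})\xrightarrow{\sim}\ParVect_{\frac{1}{r}}(X,D)$ of Theorem \ref{thm:Biswas-Borne-correspondence}, together with the fact established in the preceding lemmas that $F$ is an additive tensor equivalence preserving internal Hom-sheaves, duals, and the distinguished unit objects $\mc{O}_{\mf{X}}\leftrightarrow(\mc{O}_X)_\bullet$. Given an orthogonal (resp. symplectic) bundle $(\mc{E},\varphi)$ on $\mf{X}$ taking values in $\mc{L}$, the morphism $\varphi:\mc{E}\otimes\mc{E}\to\mc{L}$ is carried by $F$ to a parabolic morphism $\phi_\bullet:(E_\bullet\otimes E_\bullet)_\bullet\to L_\bullet$, since $F$ respects tensor products and sends $\mc{L}$ to $L_\bullet$. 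Thus $F$ sends a pair $(\mc{E},\varphi)$ to a pair $(E_\bullet,\phi_\bullet)$ of the expected shape, and conversely. It remains to verify that the defining conditions on $\widetilde{\varphi}$ --- symmetry (resp. anti-symmetry) and invertibility --- transport correctly under this assignment.

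The key input is Lemma \ref{lem:tilde correspondence}, which already tells us that $F$ carries $\widetilde{\varphi}$ to $\widetilde{\phi_\bullet}$. The central remaining point is that $F$ is compatible with the transpose operation, i.e. that $F(\widetilde{\varphi}^{\,t})=(\widetilde{\phi_\bullet})^t$. To see this I would compare the two defining compositions \eqref{eqn:transpose alternate definition for stacky vector bundles} and \eqref{eqn:transpose alternate description for parabolic bundles} term by term under $F$. Each constituent arrow is of one of three types: a canonical tensor/unit isomorphism (respected because $F$ is a tensor functor), the dual arrow $\alpha^{\dual}$ (resp. $\psi^{\dual}_\bullet$), or the morphism $\mc{O}_{\mf{X}}\to\mc{L}^{\dual}\otimes\mc{L}$ induced by $\Id_{\mc{L}}$ (resp. the parabolic morphism $i_\bullet$ of \eqref{eqn:identity parabolic morphism} induced by $\Id_{L_\bullet}$). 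Preservation of the dual arrow follows from functoriality of $F$ together with its compatibility with duals: if $\alpha\leftrightarrow\psi_\bullet$ then $\alpha^{\dual}\leftrightarrow\psi^{\dual}_\bullet$. Preservation of the identity-induced arrow is handled exactly as in the proof of Lemma \ref{lem:tilde correspondence}, by applying the general naturality fact encoded in diagram \eqref{diagram:generality} to the relevant adjunction isomorphisms, now with $E_\bullet$ replaced by $L_\bullet$. Concatenating these term-wise correspondences yields $F(\widetilde{\varphi}^{\,t})=(\widetilde{\phi_\bullet})^t$.

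With the compatibility of transpose in hand, the theorem follows quickly. Since $F$ is additive, the equation $\widetilde{\varphi}^{\,t}=\pm\widetilde{\varphi}$ holds if and only if $(\widetilde{\phi_\bullet})^t=\pm\widetilde{\phi_\bullet}$, so symmetry (resp. anti-symmetry) is preserved in both directions; and because any equivalence of categories preserves isomorphisms, $\widetilde{\varphi}$ is an isomorphism precisely when $\widetilde{\phi_\bullet}$ is a parabolic isomorphism. Hence $F$ restricts to a bijection on objects of the two orthogonal (resp. symplectic) categories. On morphisms, a morphism of orthogonal (resp. symplectic) bundles is by definition a morphism of the underlying (parabolic) bundles intertwining the two forms; since $F$ is fully faithful and identifies $\varphi$ with $\phi_\bullet$ compatibly, it identifies the form-preserving morphisms on both sides, giving the desired equivalence of categories.

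The step I expect to be the main obstacle is the term-by-term compatibility of the transpose in the second paragraph, and within it the dual arrow $\alpha^{\dual}\leftrightarrow\psi^{\dual}_\bullet$: one must check carefully that the abstract statement ``$F$ preserves duals of objects'' upgrades to ``$F$ preserves duals of morphisms'' in a manner compatible with the specific canonical isomorphisms entering the definition of the transpose. Once this naturality is pinned down, the symmetry and invertibility conditions transfer formally.
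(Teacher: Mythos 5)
Your proposal is correct and follows essentially the same route as the paper's own proof: invoke Borne's tensor equivalence for $\varphi\leftrightarrow\phi_\bullet$, use Lemma \ref{lem:tilde correspondence} for $\widetilde{\varphi}\leftrightarrow\widetilde{\phi_\bullet}$, and then compare the compositions \eqref{eqn:transpose alternate definition for stacky vector bundles} and \eqref{eqn:transpose alternate description for parabolic bundles} stage by stage to transport the transpose, symmetry, and invertibility conditions. Your second paragraph merely makes explicit (dual arrows via compatibility of $F$ with duals, the identity-induced arrow via the naturality diagram \eqref{diagram:generality}) what the paper asserts in one line, and your treatment of morphisms is a harmless extra detail beyond the paper's object-level statement.
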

	\begin{proof}
		Let $\mc{E}$ be an orthogonal vector bundle on $\mf{X}$ together with a morphism $\varphi: \mc{E}\otimes\mc{E}\rightarrow\mc{L}$. By Theorem \ref{thm:Biswas-Borne-correspondence}, $\varphi$ corresponds to a parabolic morphism $\phi_\bullet:(E_\bullet\otimes E_\bullet)_{\bullet}\rightarrow L_\bullet$. By Lemma \ref{lem:tilde correspondence}, $\widetilde{\varphi}$ corresponds to $\widetilde{\phi_{\bullet}}$ (\eqref{eqn:tilde of a stacky bilinear form} and \eqref{eqn:tilde of a parabolic bilinear form}). To see that the associated transpose maps $(\widetilde{\varphi})^t$ and $(\widetilde{\phi_{\bullet}})^t$ also correspond, we again compare the compositions \eqref{eqn:transpose alternate definition for stacky vector bundles} and \eqref{eqn:transpose alternate description for parabolic bundles} corresponding to the maps $\widetilde{\varphi}^t$ and $(\widetilde{\phi_{\bullet}})^t$ respectively, in which we see that the bundles and morphisms appearing in the compositions correspond to each other at each stage under the composition of Theorem \ref{thm:Biswas-Borne-correspondence}. It immediately follows that $(\mc{E},\varphi)$ is an orthogonal (respectively, symplectic) bundle on $\mf{X}$ if and only if $(E_\bullet,\phi_\bullet)$ is an orthogonal (respectively, symplectic) parabolic bundle on $X$.
	\end{proof}
	\section{Orthogonal and symplectic bundles on the root stack as reductions of structure groups}
	For this section, we assume our base field to be $k=\mathbb{C}$. For a linear algebraic group $G$, one can consider principal $G$-bundles on a Deligne-Mumford stack $\mf{X}$ of finite type, which are constructed analogous to coherent sheaves on $\mf{X}$ \cite[\S 1.2]{BisMajWon12}. Let $X,D$ and $r$ be as in the previous section, and consider the resulting $r$-th root stack $\mf{X}$ as before. Let $\mc{P}$ be a principal $G$-bundle on $\mf{X}$. For any representation $\rho: G\rightarrow \text{GL}(V)$, the associated bundle construction $\mc{P}\times_{\rho} V$ gives rise to a vector bundle on $\mf{X}$. If $\text{Rep}(G)$ denotes the category of finite-dimensional left linear representations of $G$ (or equivalently $G$-modules), $\mc{P}$ induces in this way  a tensor functor from $\text{Rep}(G)$ to $\Vect(\mf{X})$, which we denote by $F_{\mc{P}}$. The functor $F_{\mc{P}}$ enjoys certain abstract properties \cite{Nor76}:
	\begin{enumerate}[(i)]
		\item $F_{\mc{P}}$ is additive;
		\item It is compatible with the direct sum and tensor product operations of $\text{Rep}(G)$ and $\Vect(\mf{X})$;
		\item It takes the $1$-dimensional trivial representation to the trivial line bundle on $\mf{X}$;
		\item It takes an $n$-dimensional representation to a rank $n$-vector bundle on $\mf{X}$.
	\end{enumerate} 
	Now, consider $X, D$ and $r$ as before. Let $\mf{X}$ denote the $r$-th root stack. Using with the equivalence between $\Vect(\mf{X})$ and $\ParVect_{\frac{1}{r}}(X,D)$ [Theorem \ref{thm:Biswas-Borne-correspondence}], $F_{\mc{P}}$ gives rise to a functor from $\text{Rep}(G)$ to  $\ParVect_{\frac{1}{r}}(X,D)$, which also satisfies the properties (i)-(iv) above. We denote the resulting functor by $E_{\mc{P}}$. The following result shows that the map $\mc{P}\mapsto E_{\mc{P}}$ is actually a one-to-one correspondence.
	\begin{proposition}\label{prop:principal bundles correspond to tensor functors}
		Let $G$ be a linear algebraic group. Sending a principal $G$-bundle $\mc{P}$ on the $r$-th root stack $\mf{X}$ to the functor $E_{\mc{P}}$ as above is a one-to-one correspondence between the class of principal $G$-bundles on $\mf{X}$ and the class of tensor functors from $ \text{Rep}(G)$ to $\ParVect_{\frac{1}{r}}(X,D)$ which satisfy the properties (i)-(iv).
	\end{proposition}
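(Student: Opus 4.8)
The plan is to factor the assignment $\mc{P}\mapsto E_{\mc{P}}$ through Borne's equivalence, thereby separating the statement into a purely ``stacky'' reconstruction problem and a formal transport-of-structure step. Write $\mathcal{B}\colon \Vect(\mf{X})\xrightarrow{\ \simeq\ }\ParVect_{\frac{1}{r}}(X,D)$ for the equivalence of Theorem \ref{thm:Biswas-Borne-correspondence}. By construction $E_{\mc{P}}=\mathcal{B}\circ F_{\mc{P}}$, so it suffices to prove two independent claims: (A) the rule $\mc{P}\mapsto F_{\mc{P}}$ is a bijection from principal $G$-bundles on $\mf{X}$ onto the class of tensor functors $\text{Rep}(G)\to\Vect(\mf{X})$ satisfying (i)--(iv); and (B) post-composition with $\mathcal{B}$ is a bijection between tensor functors $\text{Rep}(G)\to\Vect(\mf{X})$ satisfying (i)--(iv) and tensor functors $\text{Rep}(G)\to\ParVect_{\frac{1}{r}}(X,D)$ satisfying (i)--(iv).

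Claim (B) is the formal step. The preceding lemma shows that $\mathcal{B}$ carries tensor products, duals and internal Hom-objects of $\Vect(\mf{X})$ to the corresponding parabolic operations, and sends the unit $\mc{O}_{\mf{X}}$ to the unit $(\mc{O}_X)_{\bullet}$; being an equivalence, it is additive and rank-preserving as well. Consequently, if $F$ satisfies (i)--(iv) then so does $\mathcal{B}\circ F$, and if we fix a quasi-inverse $\mathcal{B}^{-1}$ then $H\mapsto \mathcal{B}^{-1}\circ H$ provides a two-sided inverse up to natural isomorphism, again preserving (i)--(iv). This gives (B) with no further input.

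For Claim (A), I would adapt Nori's reconstruction \cite{Nor76} from schemes to the Deligne-Mumford stack $\mf{X}$, handling surjectivity and injectivity by a single construction. Since $G$ is a linear algebraic group, $G=\Spec k[G]$; regard the Hopf algebra $k[G]$ as a $G$-representation by translation and write it as the filtered union of its finite-dimensional subrepresentations $V_i$. Extend $F$ to this ind-object by $F(k[G]):=\varinjlim F(V_i)$, a quasi-coherent sheaf of $\mc{O}_{\mf{X}}$-algebras whose multiplication is transported from that of $k[G]$ through the monoidal structure of $F$, and set
\[
\mc{P}_F:=\ul{\Spec}_{\,\mc{O}_{\mf{X}}}\,F(k[G]),
\]
with $G$-action induced by the comultiplication. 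Conversely, for a principal bundle $\mc{P}$ one has $F_{\mc{P}}(k[G])\simeq \varpi_*\mc{O}_{\mc{P}}$ for the structure map $\varpi\colon\mc{P}\to\mf{X}$, so that $\ul{\Spec}$ recovers $\mc{P}$; this yields injectivity, while $F_{\mc{P}_F}\simeq F$ yields surjectivity.

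The main obstacle is verifying, on the stack rather than on a scheme, that $\mc{P}_F$ is genuinely a principal $G$-bundle -- i.e. is smooth-locally trivial, compatibly with its $G$-action -- and that $F_{\mc{P}_F}\simeq F$ as tensor functors. I would check both properties after pulling back along a smooth atlas $u\colon U\to\mf{X}$: there $u^*F$ is a tensor functor into $\Vect(U)$ satisfying (i)--(iv), Nori's theorem applies verbatim, and $u^*\mc{P}_F\simeq\mc{P}_{u^*F}$ is a principal $G$-bundle on $U$. Since the formation of $\ul{\Spec}F(k[G])$ commutes with the pullbacks and descent isomorphisms built into the definition of a sheaf on $\mf{X}$, these atlas-level verifications glue to the required statement over $\mf{X}$, and the two constructions are seen to be mutually inverse. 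The remaining compatibilities are routine.
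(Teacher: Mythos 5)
Your proposal is correct in substance, but it takes a genuinely different route from the paper, whose entire proof of Proposition \ref{prop:principal bundles correspond to tensor functors} is the one-line citation \cite[Proposition 6.1]{BisMajWon12}, where precisely this correspondence between principal $G$-bundles on the root stack and tensor functors into $\ParVect_{\frac{1}{r}}(X,D)$ is established (that result resting, in turn, on Nori's reconstruction theorem \cite{Nor76} and the circle of ideas around $(\Gamma,G)$-bundles \cite{BalaBisNag01} that the paper later reuses for Proposition \ref{prop:orthogonal and symplectic bundles as reductions of structure group}). You instead give a self-contained two-step argument: (B) a formal transport across Borne's equivalence $\mathcal{B}$ of Theorem \ref{thm:Biswas-Borne-correspondence}, which is legitimate since the paper's lemma shows $\mathcal{B}$ respects tensor products, internal Homs, duals and units --- though you should say explicitly that the resulting bijection is between \emph{isomorphism classes} of functors, as $\mathcal{B}^{-1}\circ\mathcal{B}$ is only naturally isomorphic to the identity (this is also the only sensible reading of ``class'' in the statement); and (A) a stacky Nori reconstruction, building $\mc{P}_F$ as the relative $\Spec$ of the quasi-coherent $\mc{O}_{\mf{X}}$-algebra $F(k[G])$ and verifying principality after pullback to a smooth atlas. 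Step (A) is sound: relative $\Spec$ commutes with base change, being a $G$-torsor is a smooth-local property of the base, and on an atlas (a smooth $k$-scheme, to be treated component by component if disconnected) Nori's theorem with exactly the hypotheses (i)--(iv) applies. What your route buys is a clean intermediate statement --- principal $G$-bundles on an arbitrary Deligne-Mumford stack correspond to tensor functors into $\Vect(\mf{X})$ satisfying (i)--(iv) --- which does not use the root-stack structure at all, Borne's equivalence entering only at the very end; what the paper's route buys is brevity and uniformity with the references it builds on. Two of the details you defer as ``routine'' deserve explicit care: the multiplication on $F(k[G])=\varinjlim F(V_i)$ must be assembled from the maps $F(V_i)\otimes F(V_j)\simeq F(V_i\otimes V_j)\to F(V_iV_j)$ (noting that $V_iV_j$ is again a finite-dimensional subrepresentation) using the monoidal coherences of $F$; and the isomorphism $F_{\mc{P}_F}\simeq F$ should be exhibited by a canonical map, namely $F$ applied to the coaction $V\to V^{\mathrm{triv}}\otimes k[G]$, since only such canonicity guarantees compatibility with the descent isomorphisms across different atlases.
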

	\begin{proof}
		\cite[Proposition 6.1]{BisMajWon12}.
	\end{proof}
	We now focus on the case when $G$ is the orthogonal or symplectic group. Fix a positive integer $n$. We would like to interpret orthogonal bundles (respectively, symplectic bundles) of rank $n$ on the root stack $\mf{X}$ as principal $O(n)$-bundles (respectively, principal $\textnormal{Sp}(n)$-bundles), where in the symplectic case, $n$ is assumed to be even. 
	
	\begin{proposition}\label{prop:orthogonal and symplectic bundles as reductions of structure group}
		Let $X, D$ and $r$ be given as before. The class of orthogonal (respectively, symplectic) bundles of rank $n$ on the stack $\mf{X}$ of $r$-th roots such that the bilinear form takes values in $\mc{O}_{\mf{X}}$ (cf. Definition \ref{def:orthogonal and symplectic bundles on stacks}) are in bijection with the class of principal $O(n)$-bundles (respectively, principal $\textnormal{Sp}(n)$-bundles) on $\mf{X}$, where in the symplectic case, $n$ is assumed to be even.
	\end{proposition}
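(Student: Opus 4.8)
The plan is to reduce the statement to the tensor-functor description of principal bundles furnished by Proposition \ref{prop:principal bundles correspond to tensor functors}, which, combined with Borne's equivalence (Theorem \ref{thm:Biswas-Borne-correspondence}), identifies principal $G$-bundles on $\mf{X}$ with tensor functors $\text{Rep}(G) \to \Vect(\mf{X})$ satisfying properties (i)--(iv). I would construct the correspondence in both directions and check that the two constructions are mutually inverse.

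For the forward direction, start with a principal $O(n)$-bundle $\mc{P}$ (respectively, $\textnormal{Sp}(n)$-bundle) and its associated tensor functor $F_{\mc{P}}$. Applying $F_{\mc{P}}$ to the standard $n$-dimensional representation $V$ yields a rank $n$ vector bundle $\mc{E} := F_{\mc{P}}(V)$ on $\mf{X}$ by property (iv). The standard invariant form $Q : V \otimes V \to k$ (symmetric in the orthogonal case, alternating in the symplectic case) is $G$-equivariant, hence a morphism in $\text{Rep}(G)$ from $V \otimes V$ to the trivial representation; applying $F_{\mc{P}}$ and using properties (ii)--(iii) produces a form $\varphi := F_{\mc{P}}(Q) : \mc{E} \otimes \mc{E} \to \mc{O}_{\mf{X}}$. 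Since $F_{\mc{P}}$ is compatible with tensor products and duals, the associated map $\widetilde{\varphi}$ is the image under $F_{\mc{P}}$ of $\widetilde{Q} : V \to V^{\dual}$, by the same formal argument as in Lemma \ref{lem:tilde correspondence}. As $\widetilde{Q}$ is an isomorphism in $\text{Rep}(G)$ which is symmetric (respectively, anti-symmetric), and a tensor functor preserves isomorphisms together with the symmetry relation, the form $\widetilde{\varphi}$ is symmetric (respectively, anti-symmetric) and an isomorphism. Hence $(\mc{E}, \varphi)$ is an orthogonal (respectively, symplectic) bundle taking values in $\mc{O}_{\mf{X}}$ in the sense of Definition \ref{def:orthogonal and symplectic bundles on stacks}.

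For the converse, given an orthogonal (respectively, symplectic) bundle $(\mc{E}, \varphi)$ of rank $n$ with values in $\mc{O}_{\mf{X}}$, I would produce a tensor functor $F : \text{Rep}(O(n)) \to \Vect(\mf{X})$ satisfying (i)--(iv) and then invoke Proposition \ref{prop:principal bundles correspond to tensor functors}. The functor is determined on generators by sending $V \mapsto \mc{E}$ and the invariant form $Q \mapsto \varphi$. The key point is that $\text{Rep}(O(n))$ is the Tannakian tensor category generated by $(V, Q)$: every representation is a subquotient of tensor powers of $V$, and by the first fundamental theorem of invariant theory for $O(n)$ (respectively, $\textnormal{Sp}(n)$) every morphism of representations is generated by $Q$ and its inverse $\widetilde{\varphi}^{-1}$, which exists because $\widetilde{\varphi}$ is an isomorphism. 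Since $\varphi$ is symmetric (respectively, anti-symmetric) and non-degenerate, it satisfies all the relations imposed by the second fundamental theorem, so the assignment extends to a well-defined tensor functor. Equivalently, one may phrase this as a reduction of structure group: $\mc{E}$ determines its frame bundle, a principal $GL(n)$-bundle on $\mf{X}$, and the datum of a symmetric (respectively, alternating) non-degenerate $\varphi$ with values in $\mc{O}_{\mf{X}}$ is precisely a reduction of its structure group to $O(n)$ (respectively, $\textnormal{Sp}(n)$), which one checks atlas-locally and descends along the smooth atlas using that all the constructions of \S\ref{subsection:orthogonal and symplectic for stacks} were defined compatibly with pullback.

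I expect the main obstacle to be this converse direction, namely producing the principal bundle from the form. Concretely, one must verify that the tensor functor is well defined, i.e.\ that the assignment on morphisms respects composition and all relations in $\text{Rep}(O(n))$; this is where the classical invariant theory of $O(n)$ and $\textnormal{Sp}(n)$ enters essentially, and where the hypothesis that $n$ is even in the symplectic case is used, so that a non-degenerate alternating form exists. In the reduction-of-structure-group formulation the corresponding difficulty is checking that the local trivializations of $(\mc{E}, \varphi)$ as orthogonal (respectively, symplectic) bundles assemble into genuine descent data on the stack $\mf{X}$, for which one uses that a non-degenerate symmetric (respectively, alternating) form is \'etale-locally equivalent to the standard one, with automorphism group scheme exactly $O(n)$ (respectively, $\textnormal{Sp}(n)$). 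Finally, checking that the two constructions are mutually inverse is formal, since both recover $\mc{E}$ as the image of the standard representation and $\varphi$ as the image of $Q$.
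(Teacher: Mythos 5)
Your proposal is correct in outline, but it takes a genuinely different route from the paper's own proof --- in fact it coincides (in its second, descent-theoretic phrasing) with the alternative proof that the paper only sketches in the remark immediately following the proposition. The paper never constructs the principal bundle directly on the stack: it first uses Theorem \ref{thm:borne correspondence for orthogonal and symplectic bundles} to transfer the orthogonal (respectively, symplectic) bundle $(\mc{E},\varphi)$ on $\mf{X}$ to an orthogonal parabolic bundle on $X$, then invokes Proposition \ref{prop:principal bundles correspond to tensor functors} to recast principal bundles on $\mf{X}$ as tensor functors into $\ParVect_{\frac{1}{r}}(X,D)$, and finally builds the required tensor functor by passing to a Kawamata cover $p:Y\to X$ \cite{Kaw81}, using Biswas' equivalence \cite{Bis97} to turn the parabolic data into a $\Gamma$-equivariant orthogonal bundle on $Y$, taking the frame bundle as a $(\Gamma,O(n))$-bundle in the sense of \cite{BalaBisNag01}, and then checking --- via the fact that every irreducible representation of $O(n)$ or $\textnormal{Sp}(n)$ sits inside a tensor power of the standard representation \cite{GooWal} --- that the resulting functor has weights in $\frac{1}{r}\bb{Z}$. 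Your argument stays on $\mf{X}$ throughout: the forward direction (apply $F_{\mc{P}}$ to the standard representation and the invariant form, and use the formal compatibility of tensor functors with duals and transposes as in Lemma \ref{lem:tilde correspondence}) matches the paper's in spirit, but your converse replaces the entire covering-space apparatus either by a Tannakian generators-and-relations argument or by \'etale-local standardization of the form plus descent of the isometry frame bundle. What your route buys: it avoids the detour through parabolic bundles, the Kawamata cover, and the weight bookkeeping (which is automatic since you never leave $\mf{X}$), and it works over any algebraically closed field of characteristic $0$ rather than just $k=\bb{C}$. What the paper's route buys: it reuses machinery already established ([Bis97], [BalaBisNag01]) and needs no stack-level descent. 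One caution on your version: the claim that $\text{Rep}(O(n))$ is presented as a tensor category by $(V,Q)$ subject to the relations of the first and second fundamental theorems of invariant theory is a substantive result (essentially the Brauer-category presentation), and as written it is the least justified step; the isometry-frame-bundle formulation, where non-degenerate symmetric (respectively, alternating) forms are standardized \'etale-locally on an atlas and the constructions of \S\ref{subsection:orthogonal and symplectic for stacks} are checked to be pullback-compatible, is the cleaner way to close that gap.
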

	\begin{proof}
		Let us prove the statement in the case of orthogonal bundles, the argument for the case of symplectic bundles being similar. Recall that we have already established a one-to-one correspondence between the class of orthogonal bundles of rank $n$ on $\mf{X}$ taking values in $\mc{O}_{\mf{X}}$ and the orthogonal parabolic bundles of rank $n$ on $X$ taking values in $\mc{O}_X$ [Theorem \ref{thm:borne correspondence for orthogonal and symplectic bundles}]. Moreover, by Proposition \ref{prop:principal bundles correspond to tensor functors} there is a one-to-one correspondence between the class of principal $O(n)$-bundles on $\mf{X}$ and the class of tensor functors from $\text{Rep}(O(n))$ to $\ParVect_{\frac{1}{r}}(X,D)$ which satisfy conditions (i)-(iv) above. Thus, it only remains to establish a one-to-one correspondence between the class of orthogonal parabolic bundles of rank $n$ on $X$ with weights in $\frac{1}{r}\bb{Z}$ and the class of tensor functors from $\text{Rep}(O(n))$ to $\ParVect_{\frac{1}{r}}(X,D)$ which satisfy conditions (i)-(iv) above . 
		We follow the approach of \cite{BalaBisNag01}. \\
		Let $(E_{\bullet}\ ,\varphi_{\bullet})$ be an orthogonal parabolic bundle of rank $n$ and weights in $\frac{1}{r}\bb{Z}$ in the sense of Definition \ref{def:orthogonal and symplectic parabolic bundles}. By a theorem of Kawamata \cite[Theorem 17]{Kaw81}, there exists a connected smooth projective variety $Y$ over $\bb{C}$ and a finite Galois morphism
		\begin{align}
			p:Y\rightarrow X
		\end{align} 
		ramified along $D$ with ramification index $r$ and satisfying certain properties. Let $\Gamma$ be the Galois group of $p$. By the correspondence between parabolic bundles on $X$ and $\Gamma$-equivariant bundles on $Y$ \cite{Bis97}, the underlying parabolic vector $E_{\bullet}$ corresponds to a $\Gamma$-equivariant bundle $E'$ on $Y$. Moreover, the correspondence is a tensor equivalence, and thus the bilinear form $\varphi_{\bullet}$ induces a $\Gamma$-equivariant bilinear form $\varphi'$ on $E'$ which takes values in $\mc{O}_Y$. In fact, the pair $(E',\varphi')$ is an orthogonal $\Gamma$-equivariant bundle on $Y$ \cite[\S 2.4]{BisMajWon11}. Since the fibers of $E'$ have $\Gamma$-equivariant orthogonal structures, the frame bundle $Fr(E')$ admits the structure of a $(\Gamma\ ,O(n))$-bundle on $Y$ in the sense of \cite[\S 2]{BalaBisNag01}. Then, given any left linear representation of $O(n)$, the associated bundle construction gives rise to a $\Gamma$-equivariant bundle on $Y$ of rank $n$, which gives rise to a parabolic bundle on $X$ of rank $n$, again via the correspondence of \cite{Bis97}. Thus $Fr(E')$ produces a functor from the category $\text{Rep}(O(n))$ to the category $\ParVect(X,D)$ of parabolic bundles over $X$\,. It is straightforward to check that this functor does not depend on the choice of the morphism $p$ by going to a further covering (cf. the proof of \cite[Proposition 2.6]{BalaBisNag01}). Thus, starting from an orthogonal parabolic bundle $(E_{\bullet}\ ,\varphi_{\bullet})$ of rank $n$ on $X$ we end up producing a functor $F_{(E_{\bullet},\varphi_{\bullet})}$ from $\text{Rep}(O(n))$ to $\ParVect(X,D)$. We would like to say that the image of $F_{(E_{\bullet},\varphi_{\bullet})}$ actually lands in $\ParVect_{\frac{1}{r}}(X,D)$.
		Now,  it is clear that parabolic duals $E_{\bullet}^{\dual}$ and all tensor powers $(\otimes ^k E_{\bullet})_{\bullet}$ also have weights in $\frac{1}{r}\bb{Z}$. Consequently, any parabolic sub-bundle of $(\otimes^k E_{\bullet})_{\bullet}$ with its induced parabolic structure also has weights in $\frac{1}{r}\bb{Z}$. The functor $F_{(E_{\bullet},\varphi_{\bullet})}$ clearly sends the standard representation $\bb{C}^n$ of $O(n)$ to $E_{\bullet}$. Since each irreducible representation of $O(n)$ occurs as a subrepresentation of some $\otimes^k\  \bb{C}^n$ \cite[Chapter 10]{GooWal}, it immediately follows that the image of $F_{(E_{\bullet},\varphi_{\bullet})}$ in fact lands inside $\ParVect_{\frac{1}{r}}(X,D)$.\\
		In the case of symplectic bundles, the argument is analogous. The fact that each irreducible representation of $\textnormal{Sp}(n)$ ($n$ even) occurs as a subrepresentation of some $\otimes^k\  \bb{C}^n$ again follows from  \cite[Chapter 10]{GooWal}.
	\end{proof}
	\begin{remark}
		Alternatively, one can prove Proposition \ref{prop:orthogonal and symplectic bundles as reductions of structure group} by considering \'etale atlases for $\mf{X}$. It amounts to proving the statement first on each such atlas , and showing compatibility of the construction under the change of atlas maps \eqref{diagram:smooth atlas}. We have taken a different approach for the proof, as it serves as a different approach to looking at the problem, and also as an application of the results proved so far.
	\end{remark}
	\section{Orthogonal and symplectic parabolic connections}\label{section:orthogonal and symplectic parabolic connections}
	Let us consider a smooth projective variety $X$ over $k$. Let $D$ be a strict normal crossings divisor on $X$, meaning that $D$ is a reduced effective Cartier divisor such that its irreducible components are non-singular and they intersect each other transversally. We shall denote such divisors by 'sncd' for short in what follows. The condition of sncd serves two purposes. First, it allows us to talk about logarithmic sheaf of differentials on $X$ and logarithmic connections \cite{EV}. Second, In order to discuss connections on root stacks, we shall require smoothness assumptions on these stacks, which will follow if $D$ is sncd.\\
	From here onward, we shall need to use a slightly modified definition for parabolic bundles and root stacks, as given in \cite{Bor09} (see also \cite{BorLar23}). Let ${\bf D}:=(D_i)_{1\leq i\leq s}$ be the set of connected components of the sncd $D$. Let us fix an $s$-tuple ${\bf r} = (r_1,r_2,\cdots,r_s)$ of positive integers.
	For each $i$, consider the root stack $\mf{X}_{(\mc{O}(D_i), s_{D_i},r_i)}$ (cf. \S \ref{subsection:root stacks}), where $s_{D_i}$ denotes the canonical section. We shall be interested in their fiber product, namely 
	\begin{align}\label{def:modified root stack}
		\mf{X}_{(\bf{D},\bf r)} : = \prod_{X, 1\leq i \leq s}\mf{X}_{(\mc{O}(D_i), s_{D_i},r_i)}\ .
	\end{align}
	This is a smooth Deligne-Mumford stack of finite type over $k$ \cite[Proposition 2.4]{BorLar23}. As a fibred category, for any $k$-scheme $T$ the objects in $\mf{X}_{(\bf{D},\bf r)}(T)$ are given by the data of a morphism $f:T\rightarrow X$ of $k$-schemes together with generalized Cartier divisors (cf. \S \ref{subsection:root stacks}) $(\mc{L}_i\ ,s_i)$ on $X$ and isomorphisms $\phi_i : \mc{L}_i^{\otimes r_i} \simeq f^*(\mc{O}(D_i))$ satisfying $\phi_i(s_i^{\otimes r_i})=f^*(s_{D_i})$ for all $1\leq i\leq s$.
	
	Consider $\frac{1}{\bf r}\bb{Z}^s = \frac{1}{r_1}\bb{Z}\times \frac{1}{r_2}\bb{Z}\times\cdots\times\frac{1}{r_s}\bb{Z}$ as an indexing category, such that for any two multi-indices ${\bf w} = (w_1,w_2,\cdots,w_s)$ and ${\bf w'}=(w'_1,w'_2,\cdots,w'_s)$, there is a unique arrow $\bf{w}\rightarrow\bf{w'}$ iff $w_i\leq w'_i\ \forall i$. As before, we denote by $\textnormal{\bf Vect}(X)$ the category of algebraic vector bundles on $X$.
	Unless specified otherwise, we shall only consider our divisor $D$ to be sncd for the rest of the discussion.\\
	Let us now see a modified version of our Definition \ref{def:parabolic bundles} of parabolic bundles due to N. Borne \cite{Bor09}.
	\begin{definition}\text{\cite[D\'efinition 2.1.2]{Bor09}}\label{def:modified parabolic bundle}
		Let $D$ be a sncd on $X$ with components ${\bf D} = (D_i)_{1\leq i\leq s}$. A \textit{parabolic vector bundle} on $X$ with weights in $\frac{1}{\bf r}\mathbb{Z}^s$ and parabolic structure along $D$ consists of the following data:
		\begin{enumerate}[$\bullet$]
			\item A functor $E_{\bullet}: \left(\frac{1}{\bf r}\bb{Z}^s\right)^{op} \longrightarrow \textnormal{\bf{Vect}}(X)\ ,$ and 
			\item  For any integral multi-index ${\bf l}\in \mathbb{Z}^s\subset \frac{1}{\bf r}\bb{Z}^s$, a natural isomorphism $$j_{\bullet +\bf l} : E_{\bullet + {\bf l}} \stackrel{\simeq}{\longrightarrow} E_{\bullet}\otimes \mc{O}_X(-{\bf l\cdot D}) := E_{\bullet}\otimes \mc{O}_X(-\sum_{i=1}^{s}l_iD_i)$$
			making the following diagram commute for all ${\bf l\geq 0}:$
			\begin{align}
				\xymatrix{
					E_{\bullet + \bf l} \ar[r] \ar[d]_{\simeq} & E_{\bullet} \\
					E_{\bullet}\otimes \mc{O}_X(-{\bf l\cdot D}) \ar[ru] 
				}
			\end{align}
		\end{enumerate}
	\end{definition}
	A parabolic morphism between to parabolic bundles is defined in the same way as in Definition \ref{def:parabolic bundles}. Let $\textnormal{{\bf ParVect}}_{\frac{1}{\bf r}}(X,{\bf D})$ denote the category of parabolic vector bundles on $X$. 
	
	\begin{remark}
		Let $X,\bf{D,r}$ be chosen as above. Now, Theorem \ref{thm:Biswas-Borne-correspondence} has a natural analogue showing the equivalence between the category of parabolic bundles on $X$ with weights in $\frac{1}{\bf r}\mathbb{Z}^s$ and the category of algebraic vector bundles on $\mf{X}_{(\bf D,r)}$ \cite[Th\`eor\'eme 2.4.7]{Bor09}. It is clear that the notions of orthogonal and symplectic bilinear forms on a parabolic vector bundle as in Definition \ref{def:orthogonal and symplectic parabolic bundles} also make sense in this setting with the appropriate changes. The results obtained in previous sections continue to hold true in this setting as well after the modifications. in particular, the analogue of Theorem \ref{thm:borne correspondence for orthogonal and symplectic bundles} holds true in this setup as well.
		
	\end{remark}
	\subsection{Logarithmic connections and parabolic connections}
	Let $\Omega^1_{X/k}(\log D)$ denote the sheaf of meromorphic differentials on $X$ having logarithmic poles along $D$. It fits into a short-exact sequence  \cite[Proposition 2.3]{EV}:
	\begin{align}
		0\rightarrow \Omega^1_{X/k} \rightarrow \Omega^1_{X/k}(\log D) \xrightarrow{res} \oplus_{i=1}^{s} \mc{O}_{D_i} \rightarrow 0
	\end{align}
	where $res$ denotes the residue map, $D_i$'s are the components of $D$, and $\mc{O}_{D_i}$ denotes the structure sheaf of the closed subscheme $D_i$ of $X$. 
	\begin{definition}\label{def:logarithmic connection}
		Let $E_{\bullet}$ be a vector bundle on $X$. 
		\begin{enumerate}[(i)]
			\item A \textit{logarithmic connection} on $E$ is a $k$-linear sheaf homomorphism
			\begin{align}
				\nabla : E \to E \otimes_{\mc{O}_X} \Omega^1_{X/k}(\log D)
			\end{align}
			satisfying the Leibniz rule: 
			$$\nabla(f\cdot \sigma) = f\nabla \sigma + \sigma\otimes df,$$
			for all locally defined sections $f$ and $\sigma$ of $\mc{O}_X$ and $E$, respectively.  
			It is usually denoted as a pair $(E, \nabla)$. 
			If the image of $\nabla$ lands in the subsheaf $E\otimes_{\mc{O}_X}\Omega^1_{X/k}$\ , 
			$\nabla$ is said to be a {\it algebraic connection} on $E$.
			\item A \textit{morphism of logarithmic connections} between $(E,\nabla)$ and $(E',\nabla')$ is a vector bundle morphism $\phi: E\rightarrow E'$ such that the diagram below commutes:
			\begin{align}\label{morphism of connections diagram}
				\xymatrix{
					E \ar[rr]^(.3){\nabla} \ar[d]_{\phi} && E\otimes \Omega^1_{X/k}(\log D) \ar[d]^{\phi\otimes \textnormal{Id}} \\
					E' \ar[rr]^(.3){\nabla'} && E'\otimes \Omega^1_{X/k}(\log D)
				}
			\end{align}
		\end{enumerate}
	\end{definition}
	Let us denote by $\textnormal{\bf Con}(X,D)$ the category of logarithmic connections on $(X,D)$. \\
	\begin{remark}\label{rem:canonical logarithmic connection}
		\begin{enumerate}[(1)]
			\item Given a Cartier divisor $B = \sum_{i=1}^{n}\mu_iD_i$, one can always construct a canonical logarithmic connection on the line bundle $\mc{O}_X(B)$ as follows \cite[Lemma 2.7]{EV}: let $x_i$ be a local equation of $D_i$. We define the logarithmic connection
			\begin{align}\label{def:canonical logarithmic connection}
				d(B) : \mc{O}_X(B)  \longrightarrow \mc{O}_X(B)\otimes \Omega^1_{X/k}(log D)
			\end{align}
			given by the formula $d(B)\left(\prod_{i=1}^{n}x_i^{-\mu_i}\right)  = -\prod_{i=1}^{n}x_i^{-\mu_i}\cdot \sum_{i=1}^{n} \mu_i\dfrac{dx_i}{x_i}\ .$
			\item Given two logarithmic connections $(E,\nabla)$ and $(E',\nabla')$ with poles along $D$, there is a well-defined notion of tensor product $(E\otimes E',\nabla\otimes \text{Id}_{E'}+ \text{Id}_E\otimes \nabla')$, which is again a logarithmic connection with poles alond $D$ [\textit{loc. cit.}]. 
			\item Using (2), one can define the 'twist' of a logarithmic connection $(E,\nabla)$ by a Cartier divisor $B$ to be the tensor product of $(E,\nabla)$ and $(\mc{O}_X(B), d(B))$, where $d(B)$ is the canonical connection on $\mc{O}_X(B)$ \eqref{def:canonical logarithmic connection}. We shall denote this twisted connection by $(E\otimes \mc{O}_X(B),\nabla(B))$.
		\end{enumerate}
	\end{remark} 
	Let $X, {\bf D, r}$ be chosen as in the beginning of this section. Also, recall from Definition \ref{def:modified parabolic bundle} the multi-index notation ${\bf l\cdot D} := \sum_{i=1}^{s}l_iD_i$.
	\begin{definition}\label{def:parabolic connection}
		A \textit{parabolic connection} on $(X, {\bf D})$ with weights in $\frac{1}{\bf r}\mathbb{Z}$ consists of the following data:
		\begin{enumerate}[$\bullet$]
			\item A functor $(E_\bullet,\nabla_{\bullet}) : \left(\frac{1}{\bf r}\mathbb{Z}^s\right)^{op}\longrightarrow \textnormal{\bf Con}(X,D)$\ , and
			\item  for each integral multi-index ${\bf l}$\ , a natural isomorphism 
			\begin{align}
				j_{\bullet + \bf l} : (E_{\bullet + {\bf l}},\nabla_{\bullet +\bf l}) \simeq (E_\bullet\otimes \mc{O}_X(-{\bf l\cdot D}), \nabla_{\bullet}(-{\bf l\cdot D}))\ \ \ (\textnormal{cf. Remark}\ \ref{rem:canonical logarithmic connection})
			\end{align}
			making the following diagram commute for all $\bf l\geq 0$:
			\begin{align}
				\xymatrix{
					(E_{\bullet + \bf l},\nabla_{\bullet+\bf l}) \ar[dd]^{\simeq}_{j_{\bullet+\bf l}} \ar[r] & (E_{\bullet},\nabla_{\bullet})\\
					&\\
					(E_\bullet\otimes \mc{O}_X(-{\bf l\cdot D}), \nabla_{\bullet}(-{\bf l\cdot D})) \ar[uur] &
				}
			\end{align}
		\end{enumerate}
		where the upper horizontal arrow is induced from the functor $(E_\bullet\ ,\nabla_\bullet)$\ , and the slanted arrow is induced from the canonical inclusion $\mc{O}_X(-\bf{l\cdot D})\hookrightarrow \mc{O}_X$.
	\end{definition}
	\begin{remark}
		Given a parabolic connection $(E_\bullet,\nabla_\bullet)$, it is easy to see that there is an induced parabolic bundle structure $E_\bullet : \left(\frac{1}{\bf r}\mathbb{Z}^s\right)^{op}\rightarrow \textnormal{\bf Vect}(X)$ on the underlying bundle by composing the functor $(E_\bullet, \nabla_\bullet)$ with the forgetful functor $\textnormal{\bf Con}(X,D)\rightarrow \textnormal{\bf Vect}(X)$.
	\end{remark}
	\begin{definition}\text{\cite[Definition 4.3]{BorLar23}}\label{def:strongly parabolic connection}
		A parabolic connection $(E_\bullet\ ,\nabla_\bullet)$  on $(X,\bf{D})$ with weights in $\frac{1}{\bf r}\mathbb{Z}^s$ is said to be \textit{strongly parabolic} if for each $1\leq i\leq s$, the induced residue map
		\begin{align}
			\overline{res_{D_i}(\nabla_{\frac{\bf l}{\bf r}})} : \dfrac{E_{\frac{\bf l}{\bf r}}}{E_{\frac{\bf l+e_i}{\bf r}}}\longrightarrow \dfrac{E_{\frac{\bf l}{\bf r}}}{E_{\frac{\bf l+e_i}{\bf r}}}
		\end{align}
		coincides with $\frac{l_i}{r_i}\cdot\text{Id}$, where $e_i$ is the standard $i$-th basis vector in $\mathbb{Z}^s$.
	\end{definition}
	\subsection{Orthogonal and symplectic parabolic connections}\hfill\\
	Let us fix a parabolic connection $(L_\bullet,\nabla_{L_\bullet})$ on $(X,\bf{D})$ (cf. Definition \ref{def:parabolic connection}), so that the underlying bundle $L_\bullet$ is a parabolic line bundle. Let $(E_\bullet,\nabla_{E_\bullet})$ be another parabolic connection on $(X,\bf{D})$. The parabolic tensor product $(E_\bullet^{\dual}\otimes L_\bullet)_{\bullet}$ then admits the tensor product parabolic connection, namely 
	\begin{align}\label{eqn:tensor product parabolic connection}
		((E^{\dual}_{\bullet}\otimes L_{\bullet})_{\bullet}\ , \nabla_{E^{\dual}_{\bullet}}\otimes \text{Id}_{L_{\bullet}}+\text{Id}_{E^{\dual}_{\bullet}}\otimes\nabla_{L_{\bullet}})\ .
	\end{align}
	Suppose the underlying parabolic bundle $E_\bullet$ also admits an orthogonal or symplectic bilinear form $\phi_{\bullet}: E_\bullet \rightarrow (E_\bullet\otimes L_\bullet)_{\bullet}$  taking values in $L_\bullet$ (cf. Definition \ref{def:orthogonal and symplectic parabolic bundles}). Consider the parabolic morphism $\widetilde{\phi_{\bullet}}: E_\bullet \rightarrow \parhomsheaf(E_\bullet\ ,L_\bullet)_\bullet \simeq (E_\bullet^{\dual}\otimes L_\bullet)_\bullet$ associated to it \eqref{eqn:tilde of a parabolic bilinear form}, which is known to be an isomorphism by definition.
	\begin{definition}\label{def:orthogonal and symplectic parabolic connection}
		We say that a parabolic connection $\nabla_{E_\bullet}: E_\bullet \rightarrow E_\bullet\otimes \Omega^1_{X/k}(\log D)$ is \textit{compatible} with $\phi_\bullet$ if $\widetilde{\phi_\bullet}$ takes the connection $(E_\bullet,\nabla_{E_\bullet})$ to the tensor product parabolic connection in \eqref{eqn:tensor product parabolic connection}, i.e. if the following diagram commutes:
		\begin{align}\label{diagram:orthogonal and symplectic parabolic connections diagram}
			\xymatrix{
				E_\bullet \ar[rrrr]^(.4){\nabla_{E_\bullet}} \ar[d]^{\simeq}_{\widetilde{\phi_{\bullet}}} &&&& E_\bullet\otimes\Omega^1_{X/k}(\log D)\ar[d]_{\simeq}^{\widetilde{\phi_{\bullet}}\otimes \text{Id}} \\
				(E_\bullet^{\dual}\otimes L_\bullet)_\bullet \ar[rrrr]^(.4){\nabla_{E^{\dual}_{\bullet}}\otimes \text{Id}_{L_{\bullet}}+\text{Id}_{E^{\dual}_{\bullet}}\otimes\nabla_{L_{\bullet}}} &&& &	(E_\bullet^{\dual}\otimes L_\bullet)_\bullet\otimes \Omega^1_{X/k}(\log D)
			}
		\end{align} 	
		We call $\nabla_{E_\bullet}$ to be an \textit{orthogonal} (respectively, \textit{symplectic}) \textit{parabolic connection} if the bilinear form $\phi_\bullet$ is orthogonal (respectively, symplectic). 
	\end{definition}
	
	\subsection{Orthogonal and symplectic logarithmic connections on Deligne-Mumford stacks}
	The notions of algebraic and logarithmic connections have their natural analogues for a smooth Deligne-Mumford stack $\mf{X}$ of finite type over $k$. An \textit{effective Cartier divisor} on $ \mf{X}$ is a closed substack $\mf{D}\subset \mf{X}$ such that its ideal sheaf $\mc{I}_{\mf D}$ is invertible. Equivalently, for any \'etale atlas $U\rightarrow \mf{X}$, its pullback is an effective Cartier divisor on $U$. $\mf{D}$ is said to be a \textit{strict normal crossing divisor} (sncd for short) if the same is true for its pullback on an \'etale chart (cf. \cite[pp. 8]{BorLar23}). 
	
	We can construct the sheaf of differentials $\Omega^1_{\mf{X}/k}$ by taking the sheaf of differentials $\Omega^1_{U/k}$ for each \'etale atlas $U\rightarrow \mf{X}$ (cf. \S\ref{subsection:coherent sheaves on stacks}). More generally, for a sncd $\mf{D}$ on $\mf{X}$ one can obtain a stacky version of the sheaf of logarithmic differentials $\Omega^1_{\mf{X}/k}(log\mf{D})$. By a \textit{logarithmic connection} on $(\mf{X},\mf{D})$, we mean a vector bundle $\mc{E}$ together with a $k$-linear morphism $$\nabla_{\mc{E}}: \mc{E}\rightarrow \mc{E}\otimes \Omega^1_{\mf{X}/k}(\log \mf{D})$$
	such that it gives rise to a logarithmic connection on \'etale atlas upon pull-back. $\nabla_{\mc E}$ is said to be \textit{algebraic} if its image lies in $\Omega^1_{\mf{X}/k}$. In particular, one can construct a canonical logarithmic connection $d(\mf{D})$ on the line bundle $\mc{O}_{\mf{X}}(\mf{D})$. (cf. \cite{BorLar23} for more details).
	
	Fix a sncd $\mf{D}$, and a logarithmic connection $(\mc{L},\nabla_{\mc{L}})$ on $(\mf{X},\mf{D})$ so that $\mc{L}$ is a line bundle. Let $(\mc{E}\ ,\nabla_{\mc{E}})$ be another logarithmic connection on $(\mf{X},\mf{D})$. We naturally have a tensor product logarithmic connection 
	$$(\mc{E}\otimes \mc{L}\ , \nabla_{\mc{E}}\otimes \text{Id}_{\mc{L}} + \text{Id}_{\mc{E}}\otimes\nabla_{\mc{L}})$$
	on $(\mf{X},\mf{D})$. Suppose the underlying bundle $\mc{E}$ also admits an orthogonal or symplectic bilinear form $\varphi : \mc{E}\rightarrow \mc{E}\otimes \mc{L}$ taking values in $\mc{L}$ (cf. Definition \ref{def:orthogonal and symplectic bundles on stacks}). Let us Consider the morphism $\widetilde{\varphi}: \mc{E} \rightarrow \homsheaf(\mc{E},\mc{L})\simeq \mc{E}^{\dual}\otimes\mc{L}$ associated to it \eqref{eqn:tilde of a stacky bilinear form}, which is an isomorphism by definition. Imitating what we did in Definition \ref{def:orthogonal and symplectic parabolic connection}, let us call the logarithmic connection $\nabla_{\mc{E}}$ to be \textit{compatible} with $\varphi$ if the following diagram commutes:
	\begin{align}\label{diagram:orthogonal and symplectic stacky logarithmic connections diagram}
		\xymatrix{
			\mc{E} \ar[rrrr]^(.4){\nabla_{\mc E}} \ar[d]^{\simeq}_{\widetilde{\varphi}} &&&& \mc{E}\otimes\Omega^1_{\mf{X}/k}(\log \mf{D})\ar[d]_{\simeq}^{\widetilde{\varphi}\otimes \text{Id}} \\
			\mc{E}^{\dual}\otimes \mc{L} \ar[rrrr]^(.4){\nabla_{\mc{E}^{\dual}}\otimes \text{Id}_{\mc{L}}+\text{Id}_{\mc{E}^{\dual}}\otimes\nabla_{\mc{L}}} &&& &	\mc{E}^{\dual}\otimes \mc{L}\otimes \Omega^1_{\mc{X}/k}(\log \mf{D})
		}
	\end{align}
	We call $\nabla_{\mc E}$ to be an \textit{orthogonal} (respectively, \textit{symplectic}) \textit{logarithmic connection} if the bilinear form $\varphi$ is orthogonal (respectively, symplectic). 	
	
	\subsection{Biswas-Borne correspondence for orthogonal and symplectic parabolic connections}\hfill\\
	We Start with a smooth projective variety $X$ over $k$ together with a sncd $D$ with components $\textbf{D} = (D_i)_{1\leq i\leq s}$ and a multi-index $\textbf{r} = (r_1,\cdots,r_s)\in \bb{Z}_{>0}^s$ as before.   Consider the stack $\mf{X}_{(\bf{D,r})}$ \eqref{def:modified root stack} together with the natural projection $\pi : \mf{X}_{({\bf D,r})}\rightarrow X$. For each $1\leq i\leq s$ there exists an effective Cartier divisor $\mf{D}_i$ on $\mf{X}_{(\bf{D,r})}$ satisfying $\pi^*(D_i) = r_i\mf{D}_i$\ . Moreover, the Cartier divisor $\mf{D} = \sum_{i=1}^{s}r_i\mf{D}_i$ is a sncd on $\mf{X}_{(\bf{D,r})}$ \cite[Remark 2.7]{BorLar23}.
	\begin{lemma}
		The induced morphism $\pi^* \Omega^1_{X/k}(\log D)\rightarrow \Omega^1_{\mf{X_{(\bf D,r)}}/k}(\log\mf{D})$ under the log-\'etale morphism $\pi$ is an isomorphism.
	\end{lemma}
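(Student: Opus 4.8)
The plan is to check that the comparison map is an isomorphism étale-locally on $\mf{X}_{(\mathbf{D},\mathbf{r})}$. Both $\pi^*\Omega^1_{X/k}(\log D)$ and $\Omega^1_{\mf{X}_{(\mathbf{D},\mathbf{r})}/k}(\log\mf{D})$ are, by the construction recalled in \S\ref{subsection:coherent sheaves on stacks}, determined by their restrictions to étale atlases $U\to\mf{X}_{(\mathbf{D},\mathbf{r})}$, and the natural morphism between them restricts on each such atlas to the ordinary pullback of logarithmic differentials. Since a morphism of coherent sheaves on a Deligne-Mumford stack is an isomorphism exactly when its restriction to a (covering) étale atlas is, it is enough to exhibit one convenient atlas and compute there.

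First I would manufacture such an atlas from the sncd hypothesis. Fix a geometric point of $X$ and choose, étale-locally, a regular system of parameters $t_1,\dots,t_n$ on $X=\Spec A$ so that the components of $D$ through the point are exactly $D_i=V(t_i)$ for $1\le i\le m$ (after reindexing). For $i>m$ the section $s_{D_i}$ is a unit near the point, so $\mf{X}_{(\mc{O}(D_i),s_{D_i},r_i)}$ is locally isomorphic to $X$ and contributes nothing; for $1\le i\le m$ the factor is the standard local presentation of a root stack. Hence, étale-locally,
\[
\mf{X}_{(\mathbf{D},\mathbf{r})}\;\simeq\;\bigl[\widetilde U \big/ \textstyle\prod_{i=1}^{m}\mu_{r_i}\bigr],\qquad \widetilde U=\Spec A[u_1,\dots,u_m]\big/\bigl(u_i^{r_i}-t_i : 1\le i\le m\bigr),
\]
with $\widetilde U\to\mf{X}_{(\mathbf{D},\mathbf{r})}$ an étale atlas (each $\mu_{r_i}$ is étale over $k$ in characteristic $0$). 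On $\widetilde U$ the pullback of $\mf{D}_i$ is the reduced divisor $\{u_i=0\}$; write $\widetilde D=\sum_{i=1}^{m}\{u_i=0\}$, and note the composite $\widetilde U\to\mf{X}_{(\mathbf{D},\mathbf{r})}\xrightarrow{\pi}X$ is given by $t_i=u_i^{r_i}$.

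With this model the computation is immediate. Near the point $\Omega^1_{X/k}(\log D)$ is free on $\tfrac{dt_1}{t_1},\dots,\tfrac{dt_m}{t_m},dt_{m+1},\dots,dt_n$, while $\Omega^1_{\widetilde U/k}(\log\widetilde D)$ is free on $\tfrac{du_1}{u_1},\dots,\tfrac{du_m}{u_m},dt_{m+1},\dots,dt_n$, the logarithmic differentials depending only on the reduced support of the divisor. The pullback morphism sends
\[
\frac{dt_i}{t_i}\longmapsto \frac{d(u_i^{r_i})}{u_i^{r_i}}=r_i\,\frac{du_i}{u_i}\quad(1\le i\le m),\qquad dt_j\longmapsto dt_j\quad(j>m),
\]
so in these bases it is the diagonal matrix $\mathrm{diag}(r_1,\dots,r_m,1,\dots,1)$. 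Since $k$ has characteristic $0$, every $r_i$ is invertible, so this matrix is invertible and the morphism is an isomorphism on $\widetilde U$, hence on $\mf{X}_{(\mathbf{D},\mathbf{r})}$.

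The only genuine work lies in justifying the local reduction: one must verify that the factors coming from components not passing through the chosen point are indeed trivial, that $\widetilde U$ with the product action of the $\mu_{r_i}$ really presents the fiber product of root stacks étale-locally, and that the two descriptions of $\Omega^1(\log{-})$ are naturally identified on overlaps so that the pointwise isomorphism descends. These are standard facts about root stacks; once they are in place, the statement is precisely that $\pi$ is log-étale, with the characteristic-$0$ hypothesis entering exactly through the invertibility of the $r_i$ (in characteristic dividing some $r_i$ the corresponding diagonal entry would vanish and the map would fail to be an isomorphism).
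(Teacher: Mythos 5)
Your proposal is correct and follows essentially the same route as the paper: the paper likewise reduces to étale atlases of $\mf{X}_{(\mathbf{D},\mathbf{r})}$ and invokes the fact that $\pi$ is tautologically log-étale, citing \cite[Lemma 3.5, \S 3.3]{BorLar23} for the underlying local statement. The only difference is that you carry out the local Kummer-cover computation ($t_i = u_i^{r_i}$, so $dt_i/t_i \mapsto r_i\, du_i/u_i$, invertible in characteristic $0$) explicitly, whereas the paper defers it to the cited reference.
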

	\begin{proof}
		This is a stacky version of \cite[Lemma 3.5]{BorLar23}. One can prove the statement in the same way by considering \'etale atlases for $\mf{X}_{(\bf{D,r})}$, using the fact that as a log-pair, the map $(\pi,{\bf r}) : (\mf{X}_{(\bf{D,r})}\ ,(\mf{D}_i))\rightarrow (X\ ,(D_i))$ is tautologically log-\'etale \cite[\S3.3]{BorLar23}. 
	\end{proof}
	It is shown in \cite{BorLar23} that there is a tensor equivalence of category of logarithmic connections on $(\mf{X}_{(\bf D,r)},\bf{\mf{D}})$ and the category of parabolic connections on $(X,\bf{D})$ with weights in $\frac{1}{\bf r}\mathbb{Z}^s$. Furthermore, this restricts to give a tensor equivalence between the category of algebraic connections on $(\mf{X}_{(\bf D,r)},\mf{D})$ and the category of strongly parabolic connections on $(X,\bf{D})$ (Definition \ref{def:strongly parabolic connection}). Let us recall the equivalence:
	\begin{enumerate}[(1)]
		\item \cite[Definition 4.18]{BorLar23} To each logarithmic connection $(\mc E, \nabla_{\mc E})$ on $(\mf{X}_{(\bf D,r)}, \mf D)$, one associates a parabolic connection $(E_{\bullet},\widehat{\nabla}_{\bullet})$ on $(X,D)$, where 
		\begin{enumerate}[$\bullet$]
			\item $E_{\bullet}$ is the parabolic bundle given as in Theorem \ref{thm:Biswas-Borne-correspondence}, and 
			\item $\widehat{\nabla}_{\bullet}$ is given by $$\widehat{\nabla}_{\frac{\bf l}{\bf r}} = \pi_*(\nabla_{\mc{E}}(-\bf{l}\cdot\mf D))\,,$$
			here $\nabla(-\bf{l}\cdot\mf D)$ is the tensor product of the connections $(\mc{E},\nabla_{\mc{E}})$ and $(\mc{O}_{\mf{X}_{(\bf D,r)}}(\mf{D}),d(-\bf{l}\cdot\mf D))$\,.\\
			(Note that the push-forward connection makes sense because the morphism of log-pairs $(\pi,r): (\mf X, \mf D)\rightarrow (X,D)$ is log-\'etale, see \cite[Remark 3.8 and Remark 4.19]{BorLar23}).
		\end{enumerate}
		\item \cite[Definition 4.29]{BorLar23} On the other hand, for any such parabolic connection $(E_{\bullet},\widehat{\nabla}_{\bullet})$ on $(X,D)$, one associates the logarithmic connection $(\mc E, \nabla_{\mc E})$ on $\mf X$, where $$\mc E = \int^{\frac{1}{r}\bb{Z}} \pi^*(E_{\bullet})\otimes_ {\mc{O}_{\mf{X}}}{{\mc O}(\bullet r\mf D)}$$ 
		stands for the coend as in Theorem \ref{thm:Biswas-Borne-correspondence}, and $\nabla$ is the unique connection on the coend $\mc E$ compatible with the given connections at each term of the coend (the existence of such a connection is guaranteed by \cite[Lemma 4.27]{BorLar23}).
	\end{enumerate}
	The following theorem is a generalization of the above correspondence for orthogonal or symplectic parabolic connections (cf. Definition \ref{def:orthogonal and symplectic parabolic connection}). 
	\begin{theorem}\label{thm:borne correspondence for orthogonal and symplectic connections}
		Fix a line bundle $\mc{L}$ together with a logarithmic connection $\nabla_{\mc{L}}$ on $(\mf{X}_{(\bf D,r)},\mf{D})$. Let $(L_{\bullet},\nabla_{L_\bullet})$ be the corresponding parabolic connection on $(X,\bf{D})$ under Borne's equivalence. There is a one-to-one correspondence between the class of orthogonal (respectively, symplectic) vector bundles on $\mf{X}_{(\bf D,r)}$ taking values in $\mc{L}$ (Definition \ref{def:orthogonal and symplectic bundles on stacks}) equipped with a compatible logarithmic connection (in the sense of \eqref{diagram:orthogonal and symplectic stacky logarithmic connections diagram}), and the class of orthogonal (respectively, symplectic) parabolic vector bundles on $X$ taking values in $L_\bullet$ (Definition \ref{def:orthogonal and symplectic parabolic bundles}) equipped with a compatible parabolic connection (in the sense of Definition \ref{def:orthogonal and symplectic parabolic connection}).\\
		Furthermore, this equivalence restricts to a correspondence between the class of algebraic connections on $(\mf{X}_{(\bf D,r)},\mf{D})$ compatible with an orthogonal (respectively, symplectic) form and the class of compatible strongly parabolic connections on $(X,\bf D)$ compatible with an orthogonal (respectively, symplectic) form.
		
	\end{theorem}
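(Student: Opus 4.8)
The plan is to combine the two equivalences already at our disposal — the orthogonal/symplectic bundle correspondence of Theorem \ref{thm:borne correspondence for orthogonal and symplectic bundles} and the Borne--Larson equivalence between logarithmic connections on $(\mf{X}_{(\bf D,r)},\mf{D})$ and parabolic connections on $(X,\bf{D})$ recalled above — and to check that the two compatibility conditions match under this combination. Concretely, I would start with a triple $(\mc{E},\varphi,\nabla_{\mc{E}})$ consisting of an orthogonal (resp.\ symplectic) bundle $(\mc{E},\varphi)$ on $\mf{X}_{(\bf D,r)}$ taking values in $\mc{L}$ together with a logarithmic connection $\nabla_{\mc{E}}$ that is compatible with $\varphi$ in the sense of \eqref{diagram:orthogonal and symplectic stacky logarithmic connections diagram}. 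By Theorem \ref{thm:borne correspondence for orthogonal and symplectic bundles}, in its multi-index form, the pair $(\mc{E},\varphi)$ corresponds to an orthogonal (resp.\ symplectic) parabolic bundle $(E_{\bullet},\phi_{\bullet})$ taking values in $L_\bullet$, and by Borne--Larson the connection $\nabla_{\mc{E}}$ corresponds to a parabolic connection $\widehat{\nabla}_{\bullet}$ on $E_{\bullet}$. The only thing left to verify is that $\nabla_{\mc{E}}$ is compatible with $\varphi$ if and only if $\widehat{\nabla}_{\bullet}$ is compatible with $\phi_{\bullet}$ in the sense of Definition \ref{def:orthogonal and symplectic parabolic connection}.

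The key reformulation is that the compatibility diagram \eqref{diagram:orthogonal and symplectic stacky logarithmic connections diagram} says precisely that the isomorphism $\widetilde{\varphi}\colon \mc{E}\to \mc{E}^{\dual}\otimes\mc{L}$ of \eqref{eqn:tilde of a stacky bilinear form} is a \emph{morphism of logarithmic connections} from $(\mc{E},\nabla_{\mc{E}})$ to the tensor product connection on $\mc{E}^{\dual}\otimes\mc{L}$; likewise \eqref{diagram:orthogonal and symplectic parabolic connections diagram} says that $\widetilde{\phi_{\bullet}}$ of \eqref{eqn:tilde of a parabolic bilinear form} is a morphism of parabolic connections. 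To compare the two statements I would first invoke the isomorphism $\pi^*\Omega^1_{X/k}(\log D)\cong \Omega^1_{\mf{X}_{(\bf D,r)}/k}(\log\mf{D})$ established above, which guarantees that the two logarithmic differential sheaves appearing in the horizontal arrows of the two diagrams correspond to one another under the equivalence, so that a morphism of connections in one category really does pass to a morphism of connections in the other.

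The heart of the argument is then the tensor-functoriality of the Borne--Larson equivalence. Since it is a tensor equivalence between the (rigid) categories of connections, it preserves tensor products and duals of connections; in particular the tensor product connection $\nabla_{\mc{E}^{\dual}}\otimes\text{Id}_{\mc{L}}+\text{Id}_{\mc{E}^{\dual}}\otimes\nabla_{\mc{L}}$ on $\mc{E}^{\dual}\otimes\mc{L}$ corresponds to the tensor product parabolic connection in \eqref{eqn:tensor product parabolic connection} on $(E_{\bullet}^{\dual}\otimes L_{\bullet})_{\bullet}$. Combining this with Lemma \ref{lem:tilde correspondence}, which identifies the underlying bundle morphism $\widetilde{\varphi}$ with $\widetilde{\phi_{\bullet}}$, the image of the connection morphism $\widetilde{\varphi}$ under the equivalence is exactly the morphism $\widetilde{\phi_{\bullet}}$ between the corresponding parabolic connections. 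Because an equivalence of categories both carries and reflects morphisms, $\widetilde{\varphi}$ is a morphism of logarithmic connections if and only if $\widetilde{\phi_{\bullet}}$ is a morphism of parabolic connections; that is, \eqref{diagram:orthogonal and symplectic stacky logarithmic connections diagram} commutes if and only if \eqref{diagram:orthogonal and symplectic parabolic connections diagram} does. Since the symmetry type of the form (symmetric versus anti-symmetric) is preserved by Theorem \ref{thm:borne correspondence for orthogonal and symplectic bundles}, this yields the stated one-to-one correspondence in both the orthogonal and the symplectic case.

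For the final assertion, I would restrict the correspondence along the subcategories of algebraic connections. The Borne--Larson equivalence already restricts to an equivalence between algebraic connections on $(\mf{X}_{(\bf D,r)},\mf{D})$ and strongly parabolic connections on $(X,\bf{D})$ (Definition \ref{def:strongly parabolic connection}); since being orthogonal or symplectic is a condition imposed on top of the connection and is matched by the argument above, the compatible algebraic connections correspond precisely to the compatible strongly parabolic connections, as claimed. The main obstacle I anticipate is the careful bookkeeping in the third paragraph: one must ensure that the identification of the dual and tensor product connections under Borne--Larson is compatible with the canonical associativity and evaluation isomorphisms used to build $\widetilde{\varphi}$ and $\widetilde{\phi_{\bullet}}$, so that the image of $\widetilde{\varphi}$ is genuinely $\widetilde{\phi_{\bullet}}$ \emph{as a morphism of connections} and not merely as a morphism of the underlying bundles.
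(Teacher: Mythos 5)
Your proposal is correct and follows essentially the same route as the paper: both arguments reduce compatibility to an identity between the conjugated connection $(\widetilde{\varphi}\otimes \mathrm{Id})\circ\nabla_{\mc{E}}\circ\widetilde{\varphi}^{-1}$ and the tensor product connection on $\mc{E}^{\dual}\otimes\mc{L}$, transport both sides through the Borne--Laaroussi tensor equivalence using Theorem \ref{thm:borne correspondence for orthogonal and symplectic bundles} and Lemma \ref{lem:tilde correspondence}, and handle the strongly parabolic case by restricting to algebraic connections. Your reformulation of compatibility as ``$\widetilde{\varphi}$ is a morphism of logarithmic connections'' is just a repackaging of the paper's ``the two induced connections in \eqref{eqn:induced parabolic connections on stacky tensor product} coincide,'' so the two proofs are the same in substance.
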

	\begin{proof}
		Let $(\mc{E},\nabla_{\mc{E}})$ denote a logarithmic connection on $(\mf{X}_{(\bf D,r)},\mf{D})$ which corresponds to the parabolic connection $(E_\bullet,\nabla_{E_\bullet})$ on $(X,\bf{D})$ under the equivalence of \cite{BorLar23}. Consider an orthogonal (respectively, symplectic) form $\varphi$ on the underlying bundle $\mc{E}$ taking values in $\mc{L}$. Now, the associated isomorphism $\widetilde{\varphi}$ of \eqref{eqn:tilde of a stacky bilinear form} gives rise to two logarithmic connections on $\mc{E}^{\dual}\otimes \mc{L}$, namely 
		\begin{align}\label{eqn:induced parabolic connections on stacky tensor product}
			(\widetilde{\varphi}\otimes \text{Id})\circ\nabla_{\mc{E}}\circ (\widetilde{\varphi})^{-1} 
			\ \ \text{and}\ \ \nabla_{\mc{E}^{\dual}}\otimes\text{Id}_{\mc{L}}+\text{Id}_{\mc{E}^{\dual}}\otimes \nabla_{\mc{L}}\ .
		\end{align}
		By Theorem \ref{thm:borne correspondence for orthogonal and symplectic bundles}, we obtain an orthogonal (respectively, symplectic) bilinear form $\phi_{\bullet}$ on the underlying bundle $E_\bullet$ corresponding to $\varphi$ on $\mc{E}$. $\phi_\bullet$ takes values in $L_\bullet$ by construction. By Lemma \ref{lem:tilde correspondence} we immediately see that the two parabolic connections mentioned in \eqref{eqn:induced parabolic connections on stacky tensor product} precisely correspond to the parabolic connections
		\begin{align}\label{eqn:induced parabolic connections on parabolic tensor product}
			(\widetilde{\phi_{\bullet}}\otimes \text{Id})\circ\nabla_{E_\bullet}\circ (\widetilde{\phi_{\bullet}})^{-1} 
			\ \ \text{and}\ \ \nabla_{E^{\dual}_\bullet}\otimes\text{Id}_{L_\bullet}+\text{Id}_{E^{\dual}_\bullet}\otimes \nabla_{L_\bullet}\ .
		\end{align}
		on the parabolic dual $(E^{\dual}_\bullet\otimes L_{\bullet})_{\bullet}$ over $(X,\bf D)$, respectively.
		Now, by our assumption on the compatibility of $\nabla_{\mc{E}}$ with $\varphi$, we know that the two parabolic connections in \eqref{eqn:induced parabolic connections on stacky tensor product} are equal. It immediately follows that the two logarithmic connections in \eqref{eqn:induced parabolic connections on parabolic tensor product} must be equal as well, hence proving our claim. 
		
		The second assertion also follows from the exact same argument by simply restricting our attention to the class of algebraic connections on the stack. 		
	\end{proof}
	
\end{document}